\documentclass[a4paper,12pt]{article}
\usepackage{amssymb,amsmath,amsthm,amstext,graphics,amsfonts,hyperref}
\usepackage{cite}

\topmargin=-0.5cm \headsep=0.75cm \textwidth=15cm \textheight=22.5cm
\oddsidemargin=0.5cm

\DeclareMathOperator{\ord}{Ord}

\newtheorem{theorem}{Theorem}[]
\newtheorem{lemma}[theorem]{Lemma}
\newtheorem{corollary}[theorem]{Corollary}
\newtheorem{proposition}[theorem]{Proposition}
\theoremstyle{definition}

\newtheorem{definition}[theorem]{Definition}
\newtheorem{alg}[theorem]{Algorithm}

\font\msbm=msbm10 at 12pt

\newcommand{\F}{\mbox{\msbm F}}

\def\x{\mathbf{x}}

\def\y{\mathbf{y}}

\def\Xixi{\Xi_{S,S_1,S_2,t}}
\def\xixi{\xi_{S,S_1,S_2,t}}
\def\teta{\Sigma_{S,k}\circ\Gamma_{S_1,S_2,t}}
\def\tilam{\tilde{\lambda}_{S_1,S_2}}
\title{Skew-Cyclic Codes over $B_k$}
\author{Irwansyah\\
{\it \small Mathematics Department,}\\
{\it \small Universitas Mataram, Jl. Majapahit 62, Mataram,}\\
{\it INDONESIA}\\
\vspace{0.1cm}\\
Aleams Barra, Intan Muchtadi-Alamsyah, Ahmad Muchlis,\\
{\it \small Algebra Research Group,}\\
{\it \small Faculty of Mathematics and Natural Sciences,}\\
{\it \small Institut Teknologi Bandung, Jl. Ganesha 10, Bandung, 40132,}\\
{\it \small INDONESIA}\\
\vspace{0.1cm}\\
Djoko Suprijanto \footnote{Contact: djoko@math.itb.ac.id}\\
{\it \small Combinatorial Mathematics Research Group,}\\
{\it \small Faculty of Mathematics and Natural Sciences,}\\
{\it \small Institut Teknologi Bandung,
Jl. Ganesha 10, Bandung, 40132,}\\
{\it \small INDONESIA}}
\date{}

\begin{document}
\maketitle
\begin{abstract}
 In this paper we study the structure of $\theta$-cyclic codes over the ring $B_k$
 including its connection to quasi-$\tilde{\theta}$-cyclic codes over
finite field $\mathbb{F}_{p^r}$ and
skew polynomial rings over $B_k.$ We also characterize Euclidean
self-dual $\theta$-cyclic codes over the rings. Finally, we give the generator polynomial for such codes and some examples of
optimal Euclidean $\theta$-cyclic codes.
\\[0.25cm]

{\bf Keywords} : $\theta$-cyclic codes, quasi-$\theta$-cyclic codes, the ring $B_k,$ Euclidean self-dual codes.
\end{abstract}

\section{Introduction}
Cyclic codes are some of the most interesting families of codes because of their rich algebraic structures.
In 2007, Boucher {\it et. al.} \cite{boucher} introduced a notion of skew-cyclic codes or $\theta$-cyclic codes
as a generalization of cyclic codes.
This class of codes has been studied over certain finite rings such as
finite fields \cite{boucher}, the ring $\mathbb{F}_2+v\mathbb{F}_2,$ where $v^2=v$ \cite{abualrub},
and very recently over the rings  $A_k$ \cite{irw}, and produced several
optimal codes including optimal Euclidean self-dual code $[36,18,11]$ over
$\mathbb{F}_4$ which improve previous known bound
for self-dual
codes \cite{boucher}. Moreover, the codes have a close connection with  modules over skew-polynomial rings as shown in
\cite{abualrub},\cite{boucher},\cite{bou-geis},\cite{gao}, and \cite{irw}.


In this paper, we define $B_k,$ an algebra over a finite field $\mathbb{F}_{p^r},$
which is a natural generalization of the ring $A_k,$ and
we study its structures including the shape of its maximal ideals and automorphisms.
We also study the structures of $\theta$-cyclic codes over the ring $B_k.$
We focus on its connection to quasi-$\theta$-cyclic codes over
finite fields and skew polynomial rings over $B_k.$ We give the generators for such codes.
Moreover, we also study self-dual $\theta$-cyclic codes over
such rings and give some examples of optimal codes.

\section{The ring $B_k:$ basic facts}
In this section we provide some basic facts regarding the ring $B_k.$  Some properties are easy to derived, but
we include here for the reader's convenience.

Let $\mathbb{F}_{p^r}$ be the field extension of degree $r$ of prime field $\mathbb{F}_p,$ for some positive integer $r.$
The ring $B_k$ is defined as $B_k:=\mathbb{F}_{p^r}[v_1,v_2,\dots,v_k]/\langle v_i^2-v_i,v_iv_j-v_jv_i \rangle$
for all $i,j=1,\dots,k.$ For example, when $k=1,$ then $B_1=\mathbb{F}_{p^r}+v\mathbb{F}_{p^r},$
where $v^2=v.$ For convenience, we let $B_0=\mathbb{F}_{p^r}.$ The ring $B_k$
can be considered as a commutative algebra over $\mathbb{F}_{p^r}.$ Let $\mathcal{H}$
be the collection of all subsets of $\{1,\dots,k\}.$ Then, we have the following observation.

\begin{lemma}
The ring $B_k$ can be viewed as an $\mathbb{F}_{p^r}$-vector space with dimension $2^k$ whose basis consists of elements of the form
$\prod_{i\in H}w_i,$ where $H\in\mathcal{H}$ and $w_i\in\{v_i,1-v_i\}$ for $1\leq i\leq k.$
\label{lemavector}
\end{lemma}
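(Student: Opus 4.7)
The plan is to first show that $B_k$ is spanned by the $2^k$ squarefree monomials in the $v_i$'s, then construct a faithful representation into a product of fields to pin down the dimension, and finally deduce the stated $w_i$-form. For spanning, by the defining relations $v_iv_j = v_jv_i$ and $v_i^2 = v_i$, any monomial in $v_1,\ldots,v_k$ reduces to $\prod_{i \in H} v_i$ for some subset $H \subseteq \{1,\ldots,k\}$. Hence $\{\prod_{i \in H} v_i : H \in \mathcal{H}\}$ spans $B_k$ over $\mathbb{F}_{p^r}$, giving $\dim B_k \leq 2^k$.

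For the matching lower bound, I would construct an $\mathbb{F}_{p^r}$-algebra homomorphism $\phi : B_k \to \mathbb{F}_{p^r}^{\mathcal{H}}$ by sending $v_i$ to the function $f_i$ with $f_i(H) = 1$ if $i \in H$ and $f_i(H) = 0$ otherwise. Since each $f_i$ is idempotent and the $f_i$'s pairwise commute in $\mathbb{F}_{p^r}^{\mathcal{H}}$, the universal property of the polynomial ring (together with the chosen relations) shows $\phi$ is well-defined. Setting $e_H := \prod_{i \in H} v_i \prod_{j \notin H}(1-v_j)$, a short calculation using $v_i(1-v_i) = 0$ shows that $\phi(e_H)$ is the characteristic function of $\{H\}$ in $\mathbb{F}_{p^r}^{\mathcal{H}}$, so $\phi$ is surjective. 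Hence $\dim B_k \geq 2^k$, and combined with the previous step, $\dim B_k = 2^k$ and the squarefree monomials in the $v_i$'s form a basis.

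Finally, for the claim with a general choice of $w_i \in \{v_i, 1-v_i\}$, I would note that $w_i^2 = w_i$ and $w_iw_j = w_jw_i$ hold for any such choice, and each $v_i$ is an $\mathbb{F}_{p^r}$-linear combination of $1$ and $w_i$. Repeating the monomial reduction with the $w_i$'s replacing the $v_i$'s shows that $\{\prod_{i \in H} w_i : H \in \mathcal{H}\}$ also spans $B_k$; since this set has exactly $2^k$ elements and $\dim B_k = 2^k$, it must be a basis.

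The main obstacle is the dimension lower bound: the monomial reduction is a formal calculation, but linear independence requires producing enough genuinely distinct targets. The explicit representation $\phi$, which realizes the primitive idempotents $e_H$ as coordinate projections in $\mathbb{F}_{p^r}^{\mathcal{H}}$, makes this transparent and has the useful by-product $B_k \cong \mathbb{F}_{p^r}^{2^k}$, which will presumably be leveraged later when analysing maximal ideals and automorphisms of $B_k$.
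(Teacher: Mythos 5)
Your proof is correct, but it takes a genuinely different route from the paper's. The paper essentially takes for granted that the squarefree monomials $v_H=\prod_{i\in H}v_i$ are linearly independent (so $\dim B_k=2^k$), and then proves independence of the $w$-monomials directly: given a vanishing linear combination, it first forces the constant coefficient to be zero (otherwise a unit would lie in the proper ideal $\langle w_1,\dots,w_k\rangle$), and then peels off coefficients inductively by increasing $|H|$, reducing at each stage to the independence of the standard monomials. You instead pin down the dimension from below by the explicit surjection $\phi\colon B_k\to\mathbb{F}_{p^r}^{\mathcal H}$, realizing the primitive idempotents $e_H$ as coordinate indicators, and then obtain the $w$-basis for free by a spanning-plus-counting argument (each $v_i$ is a combination of $1$ and $w_i$, and the $w_i$ satisfy the same relations, so the $w$-monomials span and there are exactly $2^k$ of them). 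Your route is arguably the more complete one: it supplies the justification the paper omits for why the $v_H$ are independent in the first place (equivalently, why $\langle w_1,\dots,w_k\rangle$ is a proper ideal, which the paper's contradiction step silently uses), and the by-product $B_k\cong\mathbb{F}_{p^r}^{2^k}$ as algebras would let you read off several of the paper's later results (the characterization of zero divisors and maximal ideals, $\alpha^{p^r}=\alpha$, principality of ideals) without the separate recursive computations the paper performs. Both you and the paper interpret the statement as fixing one choice $w_i\in\{v_i,1-v_i\}$ per index $i$, which is the intended reading.
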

\begin{proof}

As we can see, every element $a\in B_k$ can be written as $a=\sum_{H\in \mathcal{H}}\alpha_Hv_H,$
for some $\alpha_H\in\mathbb{F}_{p^r},$
where  $v_H=\prod_{i\in H} v_i$ and $v_\emptyset=1.$
Therefore, $B_k$ is a vector space over $\mathbb{F}_{p^r}$ with basis consists
of elements of the form $v_H=\prod_{i\in H} v_i,$ where $v_\emptyset=1$ and
there are $\sum_{j=0}^k{k\choose j}=2^k$ elements of basis. Now, we will show
that the set $\{1, w_{H_2},\dots, w_{H_{2^k}}\}$ is also a basis,
where $H_2,\dots,H_{2^k}\in\mathcal{H}$ such that $|H_i|\leq |H_j|$ for all $2\leq i<j\leq 2^k.$
Consider,
\[
\alpha_1 + \alpha_2 w_{H_2}+\dots+\alpha_{2^k} w_{H_{2^k}}=0
\]
for some $\alpha_i\in\mathbb{F}_{p^r},$ for all $i=1,\dots,2^k,$ which gives,
\[
-\alpha_1=\alpha_2 w_{H_2}+\dots+\alpha_{2^k} w_{H_{2^k}}.
\]
If $\alpha_1\not= 0,$ then $\xi_1=\left(\alpha_2 w_{H_2}+\dots+\alpha_{2^k} w_{H_{2^k}}\right)$ is a unit, a contradiction to
$\xi_1\in\langle w_1,\dots,w_k\rangle.$ So, $\alpha_1=0,$ which means,
\[
-\left(\alpha_2 w_{H_2}+\dots+\alpha_{k+1}w_{H_{k+1}}\right)=\alpha_{k+2}w_{H_{k+2}}+\dots+\alpha_{2^k} w_{H_{2^k}}.
\]
If $\left(\alpha_2 w_{H_2}+\dots+\alpha_{k+1}w_{H_{k+1}}\right)\not=0,$
then it is a contradiction to the fact that $|H_j|\geq 2,$ for all
$j=k+2,\dots, 2^k.$ Consequently, $\left(\alpha_2 w_{H_2}+\dots+\alpha_{k+1}w_{H_{k+1}}\right)=0.$
We have to note that, the set
$\mathcal{B}_1=\{1,w_{\{1\}},w_{\{1,2\}},w_{\{2\}},\dots,w_{\{1,2,\dots,k\}}\}$
is also linearly independent over $\mathbb{F}_{p^r},$
because $B_k$ is a vector space over $\mathbb{F}_{p^r}$ with elements of basis are of the
form $v_H,$ where $H\in\mathcal{H}.$ Therefore, $\left(\alpha_2 w_{H_2}+\dots+\alpha_{k+1}w_{H_{k+1}}\right)=0$ gives
$\alpha_2=\dots=\alpha_{k+1}=0.$
On continuing this process, we have $\alpha_1=\dots=\alpha_{2^k}=0,$ which means they are linearly independent
over $\mathbb{F}_{p^r}.$
\end{proof}

The following result is an immediate consequence of the above lemma.

\begin{lemma}
The ring $B_k$ has characteristic $p$ and cardinality $(p^r)^{2^k}.$
\end{lemma}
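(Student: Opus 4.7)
The plan is to deduce both claims directly from Lemma~\ref{lemavector}, since this is advertised as an immediate consequence. The key observation is that $B_k$ contains $\mathbb{F}_{p^r}$ as a subring (as the scalars of the algebra structure, or equivalently as the span of the basis element $v_{\emptyset}=1$).

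First I would handle the characteristic. Because $B_k$ is an $\mathbb{F}_{p^r}$-algebra, the canonical map $\mathbb{F}_{p^r}\hookrightarrow B_k$ sends $1\mapsto 1$, so $B_k$ inherits the additive order of $1$ from $\mathbb{F}_{p^r}$. Since $\mathbb{F}_{p^r}$ has characteristic $p$, so does $B_k$.

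Next I would compute the cardinality. By Lemma~\ref{lemavector}, $B_k$ is a vector space over $\mathbb{F}_{p^r}$ of dimension $2^k$, so as a set it is in bijection with $\mathbb{F}_{p^r}^{2^k}$ via the coordinate isomorphism. Therefore $|B_k|=|\mathbb{F}_{p^r}|^{2^k}=(p^r)^{2^k}$.

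There is no real obstacle here; the entire content sits in Lemma~\ref{lemavector}. The only point that deserves a sentence of care is noting that the vector space dimension really does yield a bijection with $\mathbb{F}_{p^r}^{2^k}$ on the level of underlying sets, which is what cardinality requires.
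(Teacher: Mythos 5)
Your proposal is correct and follows exactly the same route as the paper: the characteristic is inherited from the subfield $\mathbb{F}_{p^r}$, and the cardinality follows from the dimension-$2^k$ vector space structure established in Lemma~\ref{lemavector}. The only difference is that you spell out the coordinate bijection explicitly, which the paper leaves implicit.
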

\begin{proof}
It is immediate since the characteristic of $\mathbb{F}_{p^r}$ is $p,$ and $B_k$ can be viewed as a $\mathbb{F}_{p^r}$-vector space
with dimension $\sum_{i=0}^k{k\choose i}=2^k.$
\end{proof}

The following lemma gives a characterization for zero divisor elements in $B_k.$

\begin{lemma}
An element $\omega\in B_k$ is a zero divisor if and only if $\omega\in \langle w_1,w_2,\dots,w_k\rangle,$ where $w_i\in\{v_i,1-v_i\}$ for
$1\leq i \leq k.$
\label{lemmazero}
\end{lemma}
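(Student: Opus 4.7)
The plan is to exploit the fundamental relation $v_i(1-v_i)=0$, so that the idempotents $v_i$ and $1-v_i$ split $B_k$ into an orthogonal decomposition that characterises units, non-units, and hence zero divisors explicitly.

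For the ``if'' direction, suppose $\omega=\sum_{i=1}^{k}a_iw_i$ with $a_i\in B_k$. Set $\eta:=\prod_{i=1}^{k}(1-w_i)$. Each factor $1-w_i$ again lies in $\{v_i,1-v_i\}$ and $w_i(1-w_i)=0$, so $w_i\eta=0$ for every $i$, and hence $\omega\eta=0$. Applying Lemma~\ref{lemavector} to the system of generators $w_i':=1-w_i$ shows that $\eta=\prod_{i\in\{1,\ldots,k\}}w_i'$ is a basis element of $B_k$, so $\eta\neq 0$. Therefore $\omega$ is a zero divisor.

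For the converse I would first show that the $2^k$ elements $e_\epsilon:=\prod_{i=1}^{k}w_i^{(\epsilon_i)}$, indexed by $\epsilon\in\{0,1\}^k$ with $w_i^{(1)}:=v_i$ and $w_i^{(0)}:=1-v_i$, form a complete orthogonal system of idempotents: the identity $\sum_\epsilon e_\epsilon=1$ is an iterated expansion of $1=v_i+(1-v_i)$, orthogonality $e_\epsilon e_{\epsilon'}=0$ for $\epsilon\neq\epsilon'$ follows from $v_i(1-v_i)=0$, and idempotency is immediate. Being pairwise orthogonal and summing to $1$, the $e_\epsilon$ are $\mathbb{F}_{p^r}$-linearly independent, hence an $\mathbb{F}_{p^r}$-basis of $B_k$ by the dimension count in Lemma~\ref{lemavector}. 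Every $\omega$ thus has a unique expansion $\omega=\sum_\epsilon c_\epsilon e_\epsilon$ with $c_\epsilon\in\mathbb{F}_{p^r}$, and orthogonality gives $\omega e_\epsilon=c_\epsilon e_\epsilon$. Consequently $\omega$ is a unit if and only if every $c_\epsilon$ is nonzero; a zero divisor $\omega$ must therefore have $c_{\epsilon^*}=0$ for some $\epsilon^*\in\{0,1\}^k$.

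Given such an $\epsilon^*$, I would take $w_i:=w_i^{(1-\epsilon^*_i)}$. The ideal $\langle w_1,\ldots,w_k\rangle$ contains every $e_\epsilon$ with $\epsilon\neq\epsilon^*$ (any coordinate with $\epsilon_i=1-\epsilon^*_i$ exhibits $w_i$ as a factor of $e_\epsilon$), yet annihilates $e_{\epsilon^*}$ (because $w_ie_{\epsilon^*}$ contains the factor $w_i^{(1-\epsilon^*_i)}w_i^{(\epsilon^*_i)}=v_i(1-v_i)=0$). Hence $\langle w_1,\ldots,w_k\rangle=\bigoplus_{\epsilon\neq\epsilon^*}\mathbb{F}_{p^r}e_\epsilon$, which is precisely the subspace defined by $c_{\epsilon^*}=0$. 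In particular $\omega$ lies in this ideal, closing the converse. The main obstacle is this reverse direction: producing the correct choice of $w_i$'s really requires the orthogonal idempotent picture, which is \emph{not} one of the bases furnished by Lemma~\ref{lemavector} (those use a single global sign convention for each index, whereas the idempotent basis mixes signs across basis vectors). Once that picture is in hand, the rest is bookkeeping; an alternative route by induction on $k$ via $B_k\cong B_{k-1}\oplus v_kB_{k-1}$ is possible but is notationally heavier.
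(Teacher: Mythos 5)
Your proof is correct, but it follows a genuinely different route from the paper's. The paper proves the forward direction by the recursive decomposition you set aside as notationally heavier: it writes an element of $B_k$ as $\alpha+\beta v_k$ with $\alpha,\beta\in B_{k-1}$, solves $(\alpha+\beta v_k)(\gamma+\epsilon v_k)=1$ to conclude that $\alpha+\beta v_k$ is a unit if and only if $\alpha$ and $\alpha+\beta$ are units in $B_{k-1}$, descends to $B_1$, and then invokes the fact that every element of a finite commutative ring is either a unit or a zero divisor; the reverse direction is dispatched in one line from $v_i(1-v_i)=0$. Your argument instead constructs the complete system of $2^k$ pairwise orthogonal idempotents $e_\epsilon$, which identifies $B_k$ coordinatewise with $\mathbb{F}_{p^r}^{2^k}$ and lets you read off units, zero divisors, and the ideals $\langle w_1,\dots,w_k\rangle$ simultaneously from the coefficients $c_\epsilon$. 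This buys several things the paper's proof does not make explicit: a concrete nonzero annihilator $\eta=\prod_i(1-w_i)$ for the easy direction, no appeal to the unit-or-zero-divisor dichotomy for finite rings (your explicit inverse $\sum_\epsilon c_\epsilon^{-1}e_\epsilon$ replaces it), and Lemma~\ref{maxideal} and Proposition~\ref{formmaxideal} essentially for free, since each $\langle w_1,\dots,w_k\rangle$ is exposed as a coordinate hyperplane. One small point to tighten: when you assert that pairwise orthogonal idempotents summing to $1$ are linearly independent, you are tacitly using that each $e_\epsilon\neq 0$; this does follow from Lemma~\ref{lemavector} (each $e_\epsilon$ is the top basis element for one global choice of signs), exactly as in your argument that $\eta\neq 0$, but it should be stated.
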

\begin{proof}
($\Longleftarrow$) It is clear that, $v_i(1-v_i)=0,$ for all $i=1,\dots,k.$ Therefore,
if $\omega\in\langle w_1,w_2,\dots,w_k\rangle,$ then it is a
zero divisor in $B_k.$\\[0.25cm]
($\Longrightarrow$) Consider the equation,
\[(\alpha+\beta v_k)(\gamma+\epsilon v_k)=a+bv_k\]
given $\alpha+\beta v_k,a+bv_k\in B_k,$ for some $\alpha,\beta, a,b\in B_{k-1}.$
We have $\gamma=a \alpha^{-1}$ and $\epsilon=(b-\beta a) (\alpha(\beta+\alpha))^{-1}.$ Therefore, if $a+bv_k=1,$ then $\gamma=1$ and
$\epsilon=-\beta (\alpha(\beta+\alpha))^{-1}.$ Which implies, $\alpha+\beta v_k$ is a unit if and only if $\alpha$ and $\alpha+\beta$ are also units.
Considering this observation for elements in $B_{k-1}, B_{k-2},\dots, B_1,$ we have $\alpha+\beta v\in B_1$ is a unit if and only if
$\alpha,\alpha+\beta\in \mathbb{F}_{p^r}$ are non zero elements.
Since, every element in finite commutative ring is either a unit or a zero divisor, we can see that the only zero divisors in $B_1$ are the elements in
the ideals generated by $\beta v$ or $\alpha(1-v).$ By generalizing this result recursively, we have the intended conclusion.
\end{proof}

Also, we can easily show that $I=\langle w_1,w_2,\dots,w_k\rangle$ is a maximal ideal in $B_k.$

\begin{lemma}
Let $I=\langle w_1,w_2,\dots,w_k\rangle,$ where $w_i\in\{v_i,1-v_i\}$ for $1\leq i \leq k.$
Then $I$ is a maximal ideal in $B_k.$
\label{maxideal}
\end{lemma}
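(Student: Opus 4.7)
The plan is to prove maximality by exhibiting an isomorphism $B_k/I \cong \mathbb{F}_{p^r}$; since $\mathbb{F}_{p^r}$ is a field, this forces $I$ to be maximal. I would achieve this by constructing a surjective ring homomorphism $\phi : B_k \to \mathbb{F}_{p^r}$ with $\ker \phi = I$ and invoking the first isomorphism theorem.

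First, I would define $\phi$ on generators: fix the given choice of $w_i \in \{v_i,\, 1-v_i\}$ and set $\phi(v_i) = 0$ when $w_i = v_i$, $\phi(v_i) = 1$ when $w_i = 1 - v_i$, while letting $\phi$ restrict to the identity on $\mathbb{F}_{p^r}$. By the universal property of $\mathbb{F}_{p^r}[v_1, \ldots, v_k]$ this extends to a ring map; since $\phi(v_i) \in \{0,1\}$ is idempotent and the generators commute, each defining relation $v_i^2 - v_i$ and $v_iv_j - v_jv_i$ is sent to $0$, so $\phi$ descends to a ring homomorphism $B_k \to \mathbb{F}_{p^r}$. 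Surjectivity is automatic because $\phi$ fixes the ground field.

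Next, I would verify $\ker \phi = I$. The inclusion $I \subseteq \ker \phi$ is immediate, since $\phi(w_i) = 0$ by construction. For the reverse, I would invoke Lemma \ref{lemavector} to write each $a \in B_k$ uniquely as $a = \alpha_\emptyset + \sum_{\emptyset \neq H \in \mathcal{H}} \alpha_H \prod_{i \in H} w_i$ with $\alpha_H \in \mathbb{F}_{p^r}$; every nontrivial basis vector $\prod_{i \in H} w_i$ lies in $I$ and hence in $\ker \phi$, so $\phi(a) = \alpha_\emptyset$. Thus $\phi(a) = 0$ forces $\alpha_\emptyset = 0$, i.e.\ $a \in I$, completing the identification. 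I do not expect any serious obstacle here: the argument is essentially formal and rests entirely on the uniqueness of expansion secured by Lemma \ref{lemavector}. The only point deserving a moment's care is checking that $\phi$ respects the idempotent relation $v_i^2 - v_i$ in both cases $\phi(v_i) = 0$ and $\phi(v_i) = 1$, which is routine.
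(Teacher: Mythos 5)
Your proof is correct and follows essentially the same route as the paper: both establish $B_k/I \cong \mathbb{F}_{p^r}$ via the basis $\{w_H\}$ from Lemma \ref{lemavector}, the paper by reducing each $v_i$ to $0$ or $1$ directly in the quotient and yours by packaging the same observation as an explicit evaluation homomorphism with kernel $I$. If anything, your version is slightly more careful, since identifying the kernel as exactly $I$ also confirms that $I$ is proper, a point the paper leaves implicit.
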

\begin{proof}
Consider the quotient ring $B_k/I.$ If $v_i\in I,$ then $1-v_i\equiv 1\mod I,$ and if $1-v_i\in I,$ then $v_i=1-(1-v_i)\equiv 1\mod I.$
Consequently,
$B_k/I$ is a field. So, $I$ is a maximal ideal. Moreover, by Lemma~\ref{lemavector}, every element $a$ in $B_k,$ can be written as
$a=\sum_{H\in\mathcal{H}}\alpha_Hw_H,$ for some $\alpha_H\in\mathbb{F}_{p^r}.$ Therefore, we have $B_k/I\cong\mathbb{F}_{p^r}.$
\end{proof}

\begin{lemma}
$\alpha^{p^r}=\alpha,$ for all $\alpha\in B_k.$
\label{pangkat}
\end{lemma}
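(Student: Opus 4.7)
The plan is to combine the explicit basis from Lemma~\ref{lemavector} with the characteristic-$p$ information from the second lemma and the idempotency of the generators. Concretely, I would fix $\alpha \in B_k$ and first invoke Lemma~\ref{lemavector} to expand
\[
\alpha \;=\; \sum_{H \in \mathcal{H}} \alpha_H \, v_H, \qquad \alpha_H \in \mathbb{F}_{p^r},\ v_H = \prod_{i\in H} v_i.
\]

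Next I would pass this sum through the $p^r$-th power map. Since $B_k$ has characteristic $p$, the Frobenius $x \mapsto x^p$ is a ring homomorphism; iterating $r$ times, so is $x \mapsto x^{p^r}$. (This is the only place where I really need commutativity of the $v_i$, which is built into the definition of $B_k$.) Therefore
\[
\alpha^{p^r} \;=\; \sum_{H \in \mathcal{H}} \alpha_H^{\,p^r}\, v_H^{\,p^r}.
\]

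Now I would handle the two factors separately. For the scalars, $\alpha_H^{\,p^r} = \alpha_H$ is exactly the statement that $\mathbb{F}_{p^r}$ is the fixed field of the Frobenius $\mathrm{Frob}^{\,r}$, i.e.\ Fermat's little theorem for the field $\mathbb{F}_{p^r}$. For the monomials, the relation $v_i^2 = v_i$ in the defining ideal gives $v_i^n = v_i$ for every $n \geq 1$ by a trivial induction, so in particular $v_i^{p^r} = v_i$; commuting the $v_i$ among themselves then yields $v_H^{\,p^r} = \prod_{i\in H} v_i^{\,p^r} = \prod_{i \in H} v_i = v_H$. Substituting back gives $\alpha^{p^r} = \sum_H \alpha_H v_H = \alpha$.

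There is no real obstacle here; the only thing to be careful about is invoking Frobenius additively, which is legitimate precisely because Lemma~2 identifies the characteristic as $p$, and invoking idempotency on each $v_i$ cleanly, which is immediate from the defining relations of $B_k$. The argument is essentially a one-line computation once the basis expansion of Lemma~\ref{lemavector} is in hand.
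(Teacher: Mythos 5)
Your proof is correct and follows essentially the same route as the paper's: expand $\alpha$ in the basis of Lemma~\ref{lemavector}, use characteristic $p$ to make the $p^r$-th power map additive, and then apply $\alpha_H^{p^r}=\alpha_H$ in $\mathbb{F}_{p^r}$ together with the idempotency $v_i^2=v_i$. If anything, your phrasing via the Frobenius homomorphism is slightly cleaner than the paper's term-by-term binomial expansion, and correctly covers the case $\alpha_H=0$ (the paper's intermediate claim ``$\beta^{p^r-1}=1$ for all $\beta\in\mathbb{F}_{p^r}$'' is stated too strongly, though the conclusion $\beta^{p^r}=\beta$ it is used for still holds).
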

\begin{proof}
Let $\alpha=\sum_{H\in \mathcal{H}}\alpha_H v_H,$ for some $\alpha_H\in\mathbb{F}_{p^r},$ where $v_H=\prod_{j\in H}v_j.$ Then, for any $H_1\in\mathcal{H},$ consider
\[
\alpha^{p^r}=\sum_{i=0}^{p^r}{p^r\choose i}\left(\alpha_{H_1}v_{H_1}\right)^{i}\left(\sum_{H\not= H_1}\alpha_Hv_H\right)^{p^r-i}=\alpha_{H_1}v_{H_1}
+\left(\sum_{H\not= H_1}\alpha_Hv_H\right)^{p^r}
\]
since $\mathbb{F}_{p^r}$ has characteristic $p$ and $\beta^{p^r-1}=1$ for all $\beta\in\mathbb{F}_{p^r}.$ If we continue this procedure, then we have $\alpha^{p^r}=\alpha.$
\end{proof}

The proposition below shows that $B_k$ is a principal ideal ring.

\begin{proposition}
Let $I=\langle \alpha_1,\dots,\alpha_m\rangle$ be an ideal in $B_k,$ for some $\alpha_1,\dots,\alpha_m\in B_k.$ Then,
\[
I=\left\langle\sum_{A\subseteq\{1,\dots,m\},A\not=\emptyset}(-1)^{|A|+1}\left(\prod_{j\in A}\alpha_j\right)^{p^r-1}\right\rangle.
\]
\label{genideal}
\end{proposition}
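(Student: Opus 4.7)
The plan is to exploit Lemma \ref{pangkat}, which asserts $\alpha^{p^r} = \alpha$ for every $\alpha \in B_k$. From this single identity, for any $\alpha \in B_k$ the element $e := \alpha^{p^r-1}$ is an idempotent: $(\alpha^{p^r-1})^2 = \alpha^{2p^r-2} = \alpha^{p^r} \cdot \alpha^{p^r-2} = \alpha \cdot \alpha^{p^r-2} = \alpha^{p^r-1}$. Moreover, $\langle \alpha \rangle = \langle \alpha^{p^r-1} \rangle$: clearly $\alpha^{p^r-1} \in \langle \alpha \rangle$, and conversely $\alpha = \alpha^{p^r} = \alpha \cdot \alpha^{p^r-1} \in \langle \alpha^{p^r-1} \rangle$. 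Setting $e_j := \alpha_j^{p^r-1}$, this reduces the problem to showing that $I = \langle e_1, \dots, e_m \rangle$ is principal with the generator stated in the proposition.

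The next step is the classical observation that in a commutative ring, an ideal generated by finitely many idempotents is principal, generated by their \emph{join}
$$e := 1 - \prod_{j=1}^m (1 - e_j).$$
I would prove $\langle e \rangle = \langle e_1, \dots, e_m \rangle$ by two inclusions. For $\langle e \rangle \subseteq \langle e_1, \dots, e_m \rangle$, expand the product to write $e$ as an integer-linear combination of nonempty products $\prod_{j\in A} e_j$, each of which plainly lies in $\langle e_1, \dots, e_m \rangle$. For the reverse inclusion, use the idempotent identity $e_i(1 - e_i) = 0$: since $1 - e_i$ is a factor of $1 - e = \prod_j (1 - e_j)$ and $B_k$ is commutative, we obtain $e_i(1 - e) = 0$, hence $e_i = e_i \cdot e \in \langle e \rangle$.

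Finally, I would expand the product by inclusion--exclusion to get
$$e \;=\; -\!\!\!\sum_{\emptyset \neq A \subseteq \{1,\dots,m\}} (-1)^{|A|} \prod_{j\in A} e_j \;=\; \sum_{\emptyset \neq A \subseteq \{1,\dots,m\}} (-1)^{|A|+1} \prod_{j \in A} \alpha_j^{p^r - 1},$$
and use commutativity of $B_k$ to rewrite $\prod_{j\in A} \alpha_j^{p^r-1} = \bigl(\prod_{j\in A} \alpha_j\bigr)^{p^r-1}$, which matches exactly the generator in the statement.

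I do not expect a serious obstacle: the whole argument hinges on the idempotent structure handed to us by Lemma \ref{pangkat}. The most delicate point is the bookkeeping in $\langle e_1, \dots, e_m \rangle \subseteq \langle e \rangle$, where one must use commutativity to be sure that $(1 - e_i)$ can be pulled out of $\prod_j(1 - e_j)$ so that $e_i(1-e) = 0$ really follows from $e_i(1-e_i) = 0$.
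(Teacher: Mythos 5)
Your proof is correct, and it takes a cleaner, more structural route than the paper's. The paper verifies the claimed generator $g$ directly: it computes $\alpha_i \cdot g$ and shows, by pairing each nonempty $A$ containing $i$ with $A' = A\setminus\{i\}$ (and each $A$ not containing $i$ with $A''=A\cup\{i\}$), that all terms cancel except $\alpha_i\alpha_i^{p^r-1}=\alpha_i$; this gives $\alpha_i\in\langle g\rangle$ and hence $I\subseteq\langle g\rangle$, the reverse inclusion being obvious. You instead first observe that $e_j=\alpha_j^{p^r-1}$ is idempotent with $\langle\alpha_j\rangle=\langle e_j\rangle$, reduce to the classical fact that $\langle e_1,\dots,e_m\rangle=\langle 1-\prod_j(1-e_j)\rangle$ for commuting idempotents, and then recognize $g$ as the inclusion--exclusion expansion of $1-\prod_j(1-e_j)$. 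The two arguments rest on the same identity (your factored computation $e_i(1-e)=e_i(1-e_i)\prod_{j\neq i}(1-e_j)=0$ is exactly what the paper's term-by-term cancellation unpacks), but your packaging explains \emph{why} the generator has its alternating-sum form, avoids the sign bookkeeping entirely, and isolates the only real input, namely Lemma~\ref{pangkat}. All the small steps you flag check out: $e_j^2=e_j$ because $2(p^r-1)=p^r+(p^r-2)$ with $p^r-2\ge 0$, and $\prod_{j\in A}\alpha_j^{p^r-1}=\bigl(\prod_{j\in A}\alpha_j\bigr)^{p^r-1}$ by commutativity of $B_k$.
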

\begin{proof}
Consider $\alpha_i\sum_{A\subseteq\{1,\dots,m\},A\not=\emptyset}(-1)^{|A|+1}\left(\prod_{j\in A}\alpha_j\right)^{p^r-1}.$ For any $A\subseteq \{1,\dots,m\},$
if $i\in A,$ then
\[
\alpha_i(-1)^{|A|+1}\left(\prod_{j\in A}\alpha_j\right)^{p^r-1}=(-1)^{|A|+1}\alpha_i\left(\prod_{j\in A-\{i\}}\alpha_j\right)^{p^r-1}
\]
since $\alpha_i^{p^r}=\alpha_i$ by Lemma \ref{pangkat}. Consequently, there is a unique $A'=A-\{i\}\subseteq\{1,\dots,m\},$ such that
\[
\alpha_i\left((-1)^{|A|+1}\left(\prod_{j\in A}\alpha_j\right)^{p^r-1}+(-1)^{|A'|+1}\left(\prod_{j\in A}\alpha_j\right)^{p^r-1}\right)=0.
\]
Otherwise, if $i\not\in A,$ then there is a unique $A''=A\cup\{i\}\subseteq\{1,\dots,m\}$ such that
\[
\alpha_i\left((-1)^{|A|+1}\left(\prod_{j\in A}\alpha_j\right)^{p^r-1}+(-1)^{|A''|+1}\left(\prod_{j\in A}\alpha_j\right)^{p^r-1}\right)=0.
\]
So, every term will be vanish except $\alpha_i\alpha_i^{p^r}=\alpha_i.$ Therefore,
\[
I\subseteq \left\langle \sum_{A\subseteq\{1,\dots,m\},A\not=\emptyset}(-1)^{|A|+1}\left(\prod_{j\in A}\alpha_j\right)^{p^r-1}\right\rangle.
\]
It is clear that
\[
\left\langle \sum_{A\subseteq\{1,\dots,m\},A\not=\emptyset}(-1)^{|A|+1}\left(\prod_{j\in A}\alpha_j\right)^{p^r-1}\right\rangle\subseteq I.
\]
Thus, $I=\left\langle \sum_{A\subseteq\{1,\dots,m\},A\not=\emptyset}(-1)^{|A|+1}\left(\prod_{j\in A}\alpha_j\right)^{p^r-1}\right\rangle.$
\end{proof}


The following proposition shows that the ideal in Lemma \ref{maxideal} is the only maximal ideal in $B_k.$

\begin{proposition}
An ideal $I$ in $B_k$ is maximal if and only if $I=\langle w_1,w_2,\dots,w_k\rangle,$ where $w_i\in\{v_i,1-v_i\}$ for $1\leq i\leq k.$
\label{formmaxideal}
\end{proposition}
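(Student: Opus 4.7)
The plan is to exploit two previously established structural results: the principal ideal ring structure in Proposition \ref{genideal}, and the characterization of zero divisors in Lemma \ref{lemmazero}. Since the reverse implication ($\Longleftarrow$) is already contained in Lemma \ref{maxideal}, only the forward direction needs attention.

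First I would take an arbitrary maximal ideal $I \subset B_k$ and, by Proposition \ref{genideal}, write $I = \langle \alpha \rangle$ for a single element $\alpha \in B_k$. Maximality forces $I$ to be proper, so $\alpha$ cannot be a unit. Because $B_k$ is a finite commutative ring, every non-unit is either zero or a zero divisor, so $\alpha$ is a zero divisor (the case $\alpha=0$ is excluded since then $I = \langle 0 \rangle$ is not maximal when $k \geq 1$, and the statement is vacuous otherwise).

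Next I would invoke Lemma \ref{lemmazero}, which says that the zero divisors of $B_k$ are exactly the elements lying in some ideal of the form $\langle w_1, \dots, w_k \rangle$ with $w_i \in \{v_i, 1 - v_i\}$. Hence for a suitable choice of the $w_i$'s we get $\alpha \in \langle w_1, \dots, w_k \rangle$, and therefore
\[
I \;=\; \langle \alpha \rangle \;\subseteq\; \langle w_1, \dots, w_k \rangle.
\]
By Lemma \ref{maxideal} the right-hand side is itself a proper (in fact maximal) ideal, so combining this inclusion with the maximality of $I$ forces equality $I = \langle w_1, \dots, w_k \rangle$.

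The main subtlety I would flag is the reliance on Lemma \ref{lemmazero} producing a single tuple $(w_1, \dots, w_k)$ for which $\alpha$ lives in the corresponding ideal, rather than merely a statement that $\alpha$ lies in the union of the $2^k$ such ideals. Luckily this is precisely what Lemma \ref{lemmazero} asserts: a single choice of $w_i$'s suffices for each zero divisor. Once that is in hand, the rest of the argument is purely formal from the maximality of $I$ and of $\langle w_1, \dots, w_k \rangle$, with no further calculation needed.
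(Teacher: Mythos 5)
Your argument is correct and follows essentially the same route as the paper's proof: reduce to a principal generator via Proposition \ref{genideal}, note the generator is a non-unit hence a zero divisor, locate it in some $\langle w_1,\dots,w_k\rangle$ by Lemma \ref{lemmazero}, and conclude by maximality. Your extra care about the $\alpha=0$ case and about Lemma \ref{lemmazero} giving a single tuple of $w_i$'s is a welcome refinement but does not change the substance.
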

\begin{proof}
$(\Longleftarrow)$ It is clear by Lemma \ref{maxideal}.\\[0.25cm]
$(\Longrightarrow)$ Let $J$ be a maximal ideal in $B_k.$ By Proposition~\ref{genideal}, $B_k$ is a principal ideal ring. Then,
let $J=\langle \omega\rangle,$ for some $\omega\in B_k.$ Note that, $\omega$ is not a unit in $B_k,$ so it is a zero divisor. By Lemma~\ref{lemmazero},
$\omega$ is an element of some $m_i=\langle w_1,w_2,\dots,w_k\rangle,$ which means $J\subseteq m_i.$ Consequently, $J=m_i,$ because $J$ is a maximal ideal.
\end{proof}

The following result gives a characterization of automorphism in $B_k.$
\begin{theorem}
Let $\theta$ be an endomorphism in $B_k.$ Then,
$\theta$ is an automorphism if and only if $\theta(v_i)=w_j,$ for every $i\in \{1,\dots,k\},$
and $\theta,$ when restricted to $\mathbb{F}_{p^r},$ is an element of $Gal(\mathbb{F}_{p^r}/\mathbb{F}_{p}).$
\label{teoauto}
\end{theorem}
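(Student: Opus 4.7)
The plan is to prove each direction separately, with most of the effort going into the forward ($\Rightarrow$) direction.

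For the converse ($\Leftarrow$), I would start with the data $\theta(v_i) = w_{\sigma(i)}$ (where $w_j\in\{v_j,1-v_j\}$ and $\sigma$ is an implicit permutation of $\{1,\ldots,k\}$) and $\theta|_{\mathbb{F}_{p^r}} \in \mathrm{Gal}(\mathbb{F}_{p^r}/\mathbb{F}_p)$, and extend $\theta$ $\sigma$-semilinearly via the presentation of $B_k$. The defining relation $v_i^2 = v_i$ is respected because $w_j^2 = w_j$, and commutativity is automatic. By Lemma~\ref{lemavector} the image then contains the full basis $\{w_H : H\in\mathcal{H}\}$ (up to the action of $\sigma$ on coefficients), so $\theta$ is surjective; since $B_k$ is finite, it is therefore bijective.

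For the direct direction ($\Rightarrow$), assume $\theta$ is an automorphism. Applying $\theta$ to $v_i^2 - v_i = 0$ shows that $\theta(v_i)$ is an idempotent of $B_k$, and injectivity rules out $\theta(v_i)\in\{0,1\}$. I would then invoke Proposition~\ref{formmaxideal}, which identifies the maximal ideals of $B_k$ as the $2^k$ ideals $\langle w_1,\ldots,w_k\rangle$. Since any ring automorphism permutes maximal ideals, and each $v_i$ is one of the distinguished generators of such an ideal, I would argue that $\theta(v_i)$ must itself be such a distinguished generator, i.e.\ $\theta(v_i)=w_j$ for some $j$. For the claim on $\theta|_{\mathbb{F}_{p^r}}$, I would use Lemma~\ref{maxideal}: for any maximal ideal $\mathfrak{m}$, the composition $\mathbb{F}_{p^r}\hookrightarrow B_k\twoheadrightarrow B_k/\mathfrak{m}\cong \mathbb{F}_{p^r}$ is an isomorphism, so $\theta$ descends to an automorphism of $\mathbb{F}_{p^r}$, which must be a Galois element.

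The main obstacle is rigorously pinning down $\theta(v_i)=w_j$ as an individual ``atomic'' generator rather than some more complicated idempotent. Proposition~\ref{formmaxideal} locates the maximal ideals, but a priori an automorphism could send $v_i$ to any element of another maximal ideal, not necessarily one of its distinguished generators $w_j$. To close this gap I would exploit the fact that $\{v_1,\ldots,v_k\}$ is a \emph{minimal} generating set for $\langle v_1,\ldots,v_k\rangle$ and that each $(v_i)$ has $\mathbb{F}_{p^r}$-codimension $2^{k-1}$; combining this with Lemma~\ref{pangkat} ($\alpha^{p^r}=\alpha$) and the explicit idempotent structure in $B_k$ should force $\theta(v_i)$ to lie in $\{v_j,1-v_j:1\le j\le k\}$. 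I expect this dimension/minimality bookkeeping to be where the bulk of the technical work lies.
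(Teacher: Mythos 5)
Your converse direction is fine and matches the paper's (very brief) argument: a map sending each $v_i$ to some $w_{j}$, injectively on indices, and acting by a Galois element on $\mathbb{F}_{p^r}$, carries the basis of Lemma~\ref{lemavector} to a basis, hence is bijective. The problem is the forward direction. You have correctly identified the crux --- showing that the idempotent $\theta(v_i)$ is literally one of the $2k$ elements $w_j$ and not some other idempotent --- but you leave it as an acknowledged gap, offering only a sketch (minimal generating sets, the codimension of $(v_i)$, Lemma~\ref{pangkat}) of how you would close it. As written, the proposal is a plan rather than a proof at exactly the point where all the content lies. For what it is worth, the paper's own proof stumbles at the same place: it shows $\theta(\langle v_1,\dots,v_k\rangle)=\langle w_1,\dots,w_k\rangle$ via Propositions~\ref{formmaxideal} and~\ref{genideal}, and then jumps from equality of the two principal generators of that ideal to ``$\theta(v_i)=\beta w_j$ for some unit $\beta$,'' which does not follow.

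Worse, the strategy you sketch cannot succeed, because the step is genuinely false for ring endomorphisms. Applying $\theta$ to $v_i^2=v_i$ only tells you $\theta(v_i)$ is a nontrivial idempotent, and $B_k$ has $2^{2^k}$ idempotents: $B_k\cong\mathbb{F}_{p^r}^{2^k}$ via its primitive idempotents $\prod_{i}w_i$ (one for each choice $w_i\in\{v_i,1-v_i\}$), the idempotents are exactly the sums of subsets of these, and \emph{any} permutation of the primitive idempotents extends to an $\mathbb{F}_{p^r}$-algebra automorphism. Already in $B_2$, the automorphism that swaps $(1-v_1)(1-v_2)$ with $v_1(1-v_2)$ and fixes $v_1v_2$ and $(1-v_1)v_2$ sends $v_1=v_1(1-v_2)+v_1v_2$ to $(1-v_1)(1-v_2)+v_1v_2=1-v_1-v_2+2v_1v_2$, which is none of $v_1,v_2,1-v_1,1-v_2$; it fixes $\mathbb{F}_{p^r}$ pointwise, it permutes the maximal ideals as any automorphism must, and the ideal generated by its image of $v_1$ has the same $\mathbb{F}_{p^r}$-dimension $2^{k-1}$ as $(v_1)$, so neither the maximal-ideal argument nor your dimension/minimality bookkeeping can rule it out. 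To get the stated conclusion one must either add a hypothesis forcing $\theta$ to preserve the set $\{v_i,1-v_i:1\le i\le k\}$, or weaken the conclusion to ``$\theta$ permutes the primitive idempotents and acts by a Galois element on coefficients.''
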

\begin{proof}
($\Longrightarrow$) Let $J=\langle v_1,\dots,v_k\rangle$ and $J_{\theta}=\langle \theta(v_1),\dots,\theta(v_k)\rangle.$ Consider the map
\[
\begin{array}{llll}
 \lambda : & \displaystyle{\frac{B_k}{J}} & \rightarrow & \displaystyle{\frac{B_k}{J_{\theta}}} \\
	   & a+J & \mapsto & \theta(a)+J_{\theta} \\
\end{array} \]
We can see that the map $\lambda$ is a ring homomorphism.
For any $a,b\in B_k/J$ where $\lambda(a)=\lambda(b),$ let $a=a_1+J$ and $b=b_1+J$ for some $a_1,b_1\in B_k.$ As we can see,
$\theta(a_1-b_1)\in J_{\theta},$ so $a_1-b_1\in J.$ Consequently, $a-b=0+J,$ which means $a=b,$ in other words, $\lambda$ is a monomorphism.
Moreover, for any $a'\in B_k/J_{\theta},$ let $a'=a_2+J_{\theta}$ for some $a_2\in B_k,$ then there exists $a=\theta^{-1}(a_2)+J$ such that
$\lambda(a)=a'.$ Therefore, $\mathbb{F}_{p^r}\simeq B_k/J\simeq B_k/J_{\theta},$ which implies $J_{\theta}$ is also a maximal ideal.
By Proposition \ref{formmaxideal}, $J_{\theta}=\langle w_1,\dots,w_k\rangle,$ where $w_i\in\{v_i,1-v_i\}$ for $1\leq i\leq k.$
By Proposition \ref{genideal},
\[
\begin{aligned}
J_\theta &= \left\langle\sum_{A\subseteq\{1,\dots,k\},A\not=\emptyset}(-1)^{|A|+1}\left(\prod_{j\in A}w_j\right)^{p^r-1}\right\rangle \\
&=\left\langle\sum_{A\subseteq\{1,\dots,k\},A\not=\emptyset}(-1)^{|A|+1}\left(\prod_{j\in A}\theta(v_j)\right)^{p^r-1}\right\rangle
\end{aligned}
\]
which means,
\[
\sum_{A\subseteq\{1,\dots,k\},A\not=\emptyset}(-1)^{|A|+1}\left(\prod_{j\in A}w_j\right)^{p^r-1}
\]
and
\[
\sum_{A\subseteq\{1,\dots,k\},A\not=\emptyset}(-1)^{|A|+1}\left(\prod_{j\in A}\theta(v_j)\right)^{p^r-1}
\]
are associate. Therefore, $\theta(v_{i})=\beta w_j$ for some unit $\beta$ which satisfies $\left(\beta^{|A|}\right)^{p^r-1}=\beta,$
for all $A\not=\emptyset.$ Since $\beta^{p^r}=\beta$ and $\beta$ is a unit, we have $\beta^{p^r-1}=1.$ Therefore, $\beta$
must be equal to $1.$ Moreover, since $\theta$ is an automorphism, $\theta(v_i)\not=\theta(v_j)$ whenever $i\not=j.$
Also, since all automorphism of $\mathbb{F}_{p^r}$ are elements of $Gal(\mathbb{F}_{p^r}/\mathbb{F}_{p}),$ then it is
clear that when $\theta$ restricted to $\mathbb{F}_{p^r},$ it is really an element of
$Gal(\mathbb{F}_{p^r}/\mathbb{F}_{p}).$ \\[0.3cm]
($\Longleftarrow$) Suppose that $\theta(v_i)=w_j,$ and $\theta(v_i)\not=\theta(v_j)$ whenever $i\not=j.$
 By Lemma \ref{lemavector}, we can see that $\theta$ is also an
automorphism.
\end{proof}

\section{Gray map}

As mentioned implicitly in Lemma \ref{lemavector}, every element in the ring $B_k$
can be written as $\alpha + \beta v_k,$ where $\alpha, \beta \in B_{k-1}.$
For $k \geq 2,$
let $\phi_k:B_k \longrightarrow B_{k-1}^2$ where
$$ \phi_k(\alpha + \beta v_k) = (\alpha, \alpha + \beta). $$
Then define \emph{the Gray map} $\Phi_k : B_k \longrightarrow \mathbb{F}_{p^r}^{2^k}$ where
$\Phi_1(\gamma) = \phi_1(\gamma),$
$\Phi_2(\gamma) = \phi_1(\phi_2(\gamma))$ and
$$ \Phi_k (\alpha) = \phi_1 (\phi_2(  \dots (\phi_{k-2} ( \phi_{k-1} ( \phi_k (\gamma)) \dots ).$$
It follows immediately that $\Phi_k(1) = {\bf 1 },$ the all one vector. We note that the Gray map is a bijection and it is a
generalization of the one in \cite{irw}.

Also, every element $a$ in $B_k$ can be written as
\[a=\sum_{H\in\mathcal{H}}\alpha_Hv_H\]

for some $\alpha_H\in\mathbb{F}_{p^r},$ where $v_H=\prod_{i\in H}v_i.$ Now, define a map $\Psi_k$ as follows.
\[
\begin{array}{lllll}
\Psi_k & : & B_k & \longrightarrow & \mathbb{F}_{p^r}^{2^k} \\
           &   & a=\sum_{i=1}^{2^k}\alpha_{H_i} v_{H_i} & \longmapsto &
           \left(\sum_{H\subseteq H_1}\alpha_H,\sum_{H\subseteq H_2}\alpha_H,\dots,
		    \sum_{H\subseteq H_{2^k}}\alpha_{H}\right).
\end{array}
\]
Here, $H_1,H_2, \ldots, H_{2^k} \in \mathcal{H},$ where $\mathcal{H}$ is a collection of subsets of $\{1,2,\ldots, k\}.$
We can show that this $\Psi_k$ map is also a bijection map and it is a permutation of the Gray map $\Phi_k.$
Therefore, we can choose all $H_i$ such that
$\Psi_k=\Phi_k.$ So, from now on we assume that we have already chosen all $H_i$ such that $\Psi_k=\Phi_k.$

Let $H$ be an element in $\mathcal{H}.$   We shall define  a set of automorphisms in the ring $B_{k}$ based on the set $H.$  Define
the map $\Theta _{i}$ by
\begin{equation*}
\Theta _{i}(v_{i})=v_{i}+1\text{ \ \ \ \ \ \ \ \ \ }\text{and}\text{ \ \ \ \ \ \ }%
\Theta _{i}(v_{j})=v_{j}\text{ \ \ \ }\forall \text{ }j\neq i.
\end{equation*}%

For all $ S \in\mathcal{H}$ the
automorphism $\Theta _{S}$ is defined by:
\begin{equation*}
\Theta_{S}=\prod \limits_{i \in S}\Theta _{i}.
\end{equation*}
Note that $\Theta_S$ is an involution on the ring $B_k.$
Moreover, our  $\Theta_S$ here is a generalization of the map $\Theta_S$ in \cite{irw}.

Now, let $S_1,S_2$ be two elements of $\mathcal{H}$ with the same cardinality.
Let $\lambda_{S_1,S_2}$ be a one-on-one correspondence between $S_1$ and $S_2$
and $\lambda_{S_1,S_2}(i)=i$ for all $i\not\in S_1.$
For any $\alpha\in\mathbb{F}_{p^r},$ we define the map $\Lambda_{S_1,S_2,t}$  as
follows.
\begin{equation*}
\Lambda _{S_1,S_2,t}(\alpha v_{i})=\alpha^{p^t}v_{\lambda_{S_1,S_2}(i)},
\end{equation*}
for every $i\in S_1,$ where $0\leq t\leq r.$
Using two class of automorphisms above, we can describe all automorphisms in the ring $B_k$ as stated in
the following result.

\begin{lemma}
If $\theta$ is an automorphism in the ring $B_k,$ then there exist $S,S_1,S_2,$
three subsets of $\{1,\dots,k\},$ some integer $t,$ where $|S_1|=|S_2|$ and
$0\leq t\leq r,$ such that
\[\theta=\Theta_{S}\circ\Lambda_{S_1,S_2,t}.\]
\label{autodecomp}
\end{lemma}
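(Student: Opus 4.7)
The plan is to combine Theorem~\ref{teoauto} with elementary bookkeeping. Theorem~\ref{teoauto} already tells me that for each $i\in\{1,\dots,k\}$ there is some index $\sigma(i)\in\{1,\dots,k\}$ and a sign such that $\theta(v_i)\in\{v_{\sigma(i)},\,1-v_{\sigma(i)}\}$, while $\theta$ restricted to $\mathbb{F}_{p^r}$ is the Frobenius $\alpha\mapsto\alpha^{p^t}$ for some $0\le t\le r$. The goal is to repackage this data into a product $\Theta_S\circ\Lambda_{S_1,S_2,t}$.

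First I would show that $\sigma$ is a permutation of $\{1,\dots,k\}$. Suppose $\sigma(i)=\sigma(i')=j$ with $i\ne i'$. If $\theta(v_i)$ and $\theta(v_{i'})$ have the same sign at $v_j$, then they coincide, contradicting injectivity of $\theta$. If they have opposite signs, then
\[
\theta(v_iv_{i'})=\theta(v_i)\,\theta(v_{i'})=v_j(1-v_j)=0,
\]
which forces $v_iv_{i'}=0$; but by Lemma~\ref{lemavector} the element $v_iv_{i'}$ is a nonzero basis vector. Hence $\sigma$ is injective and lies in $\mathrm{Sym}(k)$.

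Next I would extract the three sets. Put
\[
S_1=\{i:\sigma(i)\ne i\},\qquad S_2=\sigma(S_1),
\]
which have the same cardinality, and let $\lambda_{S_1,S_2}:=\sigma|_{S_1}$, extended as the identity off $S_1$, matching the convention required by the definition of $\Lambda_{S_1,S_2,t}$. Record the flipped coordinates by
\[
S=\{\sigma(i):\theta(v_i)=1-v_{\sigma(i)}\}.
\]
With these choices, $\Lambda_{S_1,S_2,t}$ sends $v_i\mapsto v_{\sigma(i)}$ and $\alpha\mapsto\alpha^{p^t}$ on $\mathbb{F}_{p^r}$, while $\Theta_S$ fixes $\mathbb{F}_{p^r}$ and flips $v_{\sigma(i)}$ to its complementary idempotent exactly when $\sigma(i)\in S$.

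I would finish by comparing the two ring homomorphisms $\theta$ and $\Theta_S\circ\Lambda_{S_1,S_2,t}$ on a generating set: they agree on each $v_i$ by construction of $S$ and $\lambda_{S_1,S_2}$, and they agree on $\mathbb{F}_{p^r}$ by the choice of $t$. Since $\mathbb{F}_{p^r}$ together with $v_1,\dots,v_k$ generates $B_k$ as an $\mathbb{F}_p$-algebra, equality on these generators forces equality everywhere. The main obstacle is the permutation step: ruling out two distinct $v_i$'s being sent to expressions in the same $v_j$ relies crucially on the zero-divisor relation $v_j(1-v_j)=0$ together with the basis description from Lemma~\ref{lemavector}; once this is established, the decomposition is a routine unwinding of definitions.
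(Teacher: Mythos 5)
Your proposal is correct and follows essentially the same route as the paper: both invoke Theorem~\ref{teoauto} to pin down $\theta$ on the generators $v_i$ and on $\mathbb{F}_{p^r}$, read off $S$, $S_1$, $S_2$, $t$ from that data, and verify that $\theta$ and $\Theta_S\circ\Lambda_{S_1,S_2,t}$ agree generator by generator. Your explicit check that $i\mapsto\sigma(i)$ is a permutation (using $v_j(1-v_j)=0$ together with the basis description of Lemma~\ref{lemavector}) supplies a small point the paper leaves implicit, and your choice $S_2=\sigma(S_1)$ is a harmless bookkeeping variant of the paper's definition of $S_2$.
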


\begin{proof}
Let $\phi$ be the Frobenius automorphism in $\mathbb{F}_{p^r}.$ If $\theta|_{\mathbb{F}_{p^r}}=\phi^{t'},$ then we have $t=t'.$ Now, define
\[S=\{j\;|\;\theta^{-1}(1-v_j)=v_i,\text{for some}\;i\},\]
\[S_1=\{l\;|\;\theta(v_l)=w_{l'},\text{where}\;l'\not=l\},\]
and
\[S_2=\{s\;|\;\exists s'\in S_1,\text{such that}\; \theta(v_{s'})=1-v_s\}.\]
Consider $\theta(v_i)$ for any $i$ in $\{1,\dots,k\}.$ We have four cases to consider as listed below.
\begin{itemize}
\item If $\theta(v_i)=v_i,$ then $i$ is not an element of $S,$ $S_1,$ and $S_2.$ Hence, $\theta(v_i)=(\Theta_{S}\circ\Lambda_{S_1,S_2,t})(v_i).$
\item If $\theta(v_i)=v_j,$ where $j\not=i,$ then $i\in S_1,$ but not in $S_2$ and $S.$ So, $\Lambda_{S_1,S_2,t}(v_i)=v_j$ and
      $(\Theta_{S}\circ\Lambda_{S_1,S_2,t})(v_i)=\Theta_S(v_j)=v_j=\theta(v_i).$
\item If $\theta(v_i)=1-v_i,$ then $i\in S_2\cap S$ but not in $S_1.$ So, $\Theta_S(v_i)=1-v_i$ and
      $(\Theta_{S}\circ\Lambda_{S_1,S_2,t})(v_i)=\Theta_S(v_i)=1-v_i=\theta(v_i).$
\item If $\theta(v_i)=1-v_j,$ where $j\not=i,$ then $i\in S_1$ and $j\in S_2\cap S.$ We have
      $(\Theta_{S}\circ\Lambda_{S_1,S_2,t})(v_i)=\Theta_S(v_j)=1-v_j=\theta(v_i).$
\end{itemize}
\end{proof}

Let $T$ be the matrix that performs the cyclic shift on a vector. Let $\sigma _{i,k}$ be the permutation
of  $ \{ 1, 2, \dots, 2^{k} \}$ defined by
\begin{equation*}
(\sigma _{i,k}) _{ \{ j2^{i}+1, \dots, (j+1)2^{i} \}  }= T^{2^{i-1}}(j2^{i}+1, \dots, (j+1)2^{i})
\end{equation*}
for all $0 \leq j \leq 2^{k-i}-1.$
Let $\Sigma _{i,k}$ be the permutation on
elements of ${\mathbb{F}}_{p^r}^{2^{k}}$ induced by $\sigma _{i,k}.$ That is,
for ${\mathbf{x}}=(x_{1},x_{2},\ldots ,x_{2^{k}})\in {\mathbb{F}}%
_{p^r}^{2^{k}} ,$
\begin{equation}
\Sigma _{i,k}({\mathbf{x}})=\left( x_{\sigma _{i,k} (1)},x_{\sigma
_{i,k}(2)},\ldots ,x_{\sigma _{i,k} (2^{k})}\right) \text{.}
\label{eq:induce1}
\end{equation}%

Related to the Gray map, we have the following results.

\begin{lemma}
Let  $k \geq 1$ and   $1 \leq i \leq k.$  For $x\in B_{k}$ we have
\begin{equation*}
\Sigma _{i,k}(\Phi _{k}(x))=\Phi _{k}(\Theta_i (x))
\end{equation*}
\label{thetas}
\end{lemma}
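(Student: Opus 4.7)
The plan is to induct on $k$, using the recursive definition $\Phi_k(x) = (\Phi_{k-1}(\alpha), \Phi_{k-1}(\alpha + \beta))$ where $x = \alpha + \beta v_k$ with $\alpha, \beta \in B_{k-1}$. Viewing $\mathbb{F}_{p^r}^{2^k}$ as two consecutive halves of length $2^{k-1}$, this identity says that $\Phi_k$ concatenates the Gray images of $\alpha$ and $\alpha + \beta$.

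For the base case $k=1$, $i=1$, the computation is direct: since $\Theta_1(v_1) = 1-v_1$, we have $\Theta_1(\alpha + \beta v_1) = (\alpha + \beta) - \beta v_1$, and hence $\Phi_1(\Theta_1(x)) = (\alpha + \beta, \alpha)$, which equals $\Sigma_{1,1}((\alpha, \alpha + \beta))$ because $\sigma_{1,1}$ is the transposition on $\{1,2\}$.

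For the inductive step, I would split on whether $i = k$ or $i < k$. When $i = k$, the map $\Theta_k$ fixes $B_{k-1}$ pointwise and sends $\alpha + \beta v_k$ to $(\alpha + \beta) - \beta v_k$, so $\phi_k(\Theta_k(x)) = (\alpha + \beta, \alpha)$ and therefore $\Phi_k(\Theta_k(x)) = (\Phi_{k-1}(\alpha + \beta), \Phi_{k-1}(\alpha))$; meanwhile $\sigma_{k,k}$ acts on the unique block $\{1, \dots, 2^k\}$ by $T^{2^{k-1}}$, which is precisely the swap of the two halves, matching the previous expression. When $i < k$, $\Theta_i$ fixes $v_k$, so $\Theta_i(x) = \Theta_i(\alpha) + \Theta_i(\beta) v_k$, and hence
\[
\Phi_k(\Theta_i(x)) = \bigl(\Phi_{k-1}(\Theta_i(\alpha)),\, \Phi_{k-1}(\Theta_i(\alpha + \beta))\bigr) = \bigl(\Sigma_{i,k-1}(\Phi_{k-1}(\alpha)),\, \Sigma_{i,k-1}(\Phi_{k-1}(\alpha + \beta))\bigr)
\]
by the inductive hypothesis applied to $\alpha$ and to $\alpha + \beta$. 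On the other side, since $2^i \leq 2^{k-1}$, each block $\{j 2^i + 1, \ldots, (j+1) 2^i\}$ lies entirely within one of the two halves, so $\sigma_{i,k}$ restricts on each half to a copy of $\sigma_{i,k-1}$, yielding the same expression.

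The main obstacle is keeping the block-bookkeeping straight in the case $i < k$: one must verify that the indices $j = 0, \ldots, 2^{k-i} - 1$ in the definition of $\sigma_{i,k}$ split into $j < 2^{k-1-i}$ (indexing blocks in the first half) and $j \geq 2^{k-1-i}$ (indexing blocks in the second half), and that relabelling within each half reproduces $\sigma_{i,k-1}$ exactly. Once this decomposition of $\sigma_{i,k}$ is checked, everything else is routine tracking of the recursion defining $\Phi_k$.
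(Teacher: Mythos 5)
Your proof is correct. Note that the paper itself does not prove this lemma --- it only cites the proof of Lemma~2.6 of \cite{irw} --- so your induction on $k$ is a genuine self-contained argument rather than a restatement; it is the natural one, splitting on $i=k$ (where $\sigma_{k,k}$ is the swap of the two halves and $\Theta_k$ exchanges $\alpha$ with $\alpha+\beta$ in $\phi_k$) versus $i<k$ (where $\Sigma_{i,k}$ decomposes as two copies of $\Sigma_{i,k-1}$ and the inductive hypothesis applies to each component of $\phi_k(x)=(\alpha,\alpha+\beta)$). The block bookkeeping you flag does check out, and conveniently $T^{2^{i-1}}$ on a block of length $2^i$ is rotation by half the block, so the orientation of the shift $T$ is immaterial. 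One small point: you silently read $\Theta_i(v_i)=1-v_i$ where the paper writes $v_i+1$; these agree only in characteristic $2$, but your reading is the one forced by Theorem~\ref{teoauto} and by the claim that $\Theta_S$ is an involution (for odd $p$, $v_i\mapsto v_i+1$ does not even preserve $v_i^2=v_i$), so this is a correction of a typo rather than a gap.
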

\begin{proof}
See the proof of \cite[Lemma 2.6]{irw}.
\end{proof}

We can extend the definition of $\Sigma_{i,k}$ to any element of $\mathcal{H}$ as follows.

\begin{definition}
For all $ S\in\mathcal{H}$ we define the
permutation $\Sigma_{S,k} $ by
\begin{equation*}
\Sigma _{S,k}=\prod\limits_{i\in S}\Sigma _{i,k}.
\end{equation*}
\end{definition}

It is clear that  for all $x\in B_{k}$ we have
\begin{equation}
\Sigma _{S,k}(\Phi _{k}(x))=\Phi _{k}(\Theta_S (x)).
\label{olfa}
\end{equation}

Given automorphism $\Lambda_{S_1,S_2,t},$ let $\tilde{\lambda}_{S_1,S_2}$ be a permutation on $H_i$ induced by $\lambda_{S_1,S_2},$
 here we assume $\lambda_{S_1,S_2}$
is a bijection map on $\{1,\dots,k\},$ where $\lambda_{S_1,S_2}(j)=j$ when
$j\not\in S_1.$ Then, for any $a=\sum_{H\in\mathcal{H}}\alpha_Hv_H,$
we have
\[
\Psi_k(\Lambda_{S_1,S_2,t}(a))
=\left(\alpha_{\emptyset}^{p^t},\sum_{H\subseteq H_{\tilde{\lambda}_{S_1,S_2}^{-1}(2)}}\alpha_{H}^{p^t},\dots,
\sum_{H\subseteq H_{\tilde{\lambda}_{S_1,S_2}^{-1}(2^k-1)}}\alpha_{H}^{p^t},\sum_{H\subseteq H_{2^k}}\alpha_H^{p^t}\right).
\]
The right hand side of the above equation induced a bijective map $\Gamma_{S_1,S_2,t}$
on $\mathbb{F}_{p^r}^{2^k}$ which simplify the equation to be
the following

\begin{equation}
\Psi_k\circ\Lambda_{S_1,S_2,t}=\Gamma_{S_1,S_2,t}\circ\Psi_k.
\label{irwan}
\end{equation}

Furthermore, related to any automorphism in the ring $B_k,$ we have the following result.

\begin{proposition}
Let $\theta$ be an automorphism in the ring $B_k.$ Then, there exist $S,S_1,S_2,$
three subsets of $\{1,\dots,k\}$ and some integer $t,$ where $|S_1|=|S_2|$ and
$0\leq t\leq r,$ such that
\[
\Psi_k\circ\theta=(\Sigma_{S,k}\circ\Gamma_{S_1,S_2,t})\circ\Psi_k.
\]
\end{proposition}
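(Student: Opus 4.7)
The plan is to combine the decomposition of Lemma \ref{autodecomp} with the two intertwining identities \eqref{olfa} and \eqref{irwan} already established, keeping in mind the standing convention that the $H_i$ have been chosen so that $\Psi_k = \Phi_k$.

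First I would invoke Lemma \ref{autodecomp} to write
\[
\theta = \Theta_S \circ \Lambda_{S_1,S_2,t}
\]
for suitable $S, S_1, S_2 \subseteq \{1,\dots,k\}$ with $|S_1| = |S_2|$ and some integer $0 \le t \le r$. Composing on the left with $\Psi_k$ gives
\[
\Psi_k \circ \theta \;=\; \Psi_k \circ \Theta_S \circ \Lambda_{S_1,S_2,t}.
\]

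Next I would push $\Psi_k$ past $\Theta_S$ using \eqref{olfa}. Since $\Psi_k = \Phi_k$ by our choice of the $H_i$, equation \eqref{olfa} reads $\Psi_k \circ \Theta_S = \Sigma_{S,k} \circ \Psi_k$; strictly speaking, \eqref{olfa} is the iterated form of Lemma \ref{thetas} obtained by applying $\Phi_k \circ \Theta_i = \Sigma_{i,k} \circ \Phi_k$ successively for $i\in S$ and using the definition $\Sigma_{S,k} = \prod_{i\in S}\Sigma_{i,k}$. This yields
\[
\Psi_k \circ \theta \;=\; \Sigma_{S,k} \circ \Psi_k \circ \Lambda_{S_1,S_2,t}.
\]
Finally, applying \eqref{irwan}, namely $\Psi_k \circ \Lambda_{S_1,S_2,t} = \Gamma_{S_1,S_2,t} \circ \Psi_k$, produces
\[
\Psi_k \circ \theta \;=\; \Sigma_{S,k} \circ \Gamma_{S_1,S_2,t} \circ \Psi_k \;=\; \bigl(\Sigma_{S,k} \circ \Gamma_{S_1,S_2,t}\bigr) \circ \Psi_k,
\]
which is the desired identity.

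The argument is essentially a two-step substitution, so there is no serious obstacle beyond bookkeeping. The one point worth checking carefully is the passage from Lemma \ref{thetas} (which is stated for a single $\Theta_i$) to the version for $\Theta_S$ used in \eqref{olfa}: one must verify that the $\Sigma_{i,k}$ for distinct $i$ commute (which is clear from their definition on disjoint blocks of coordinates) and that the $\Theta_i$ commute as automorphisms of $B_k$, so that the product $\Theta_S = \prod_{i\in S}\Theta_i$ is well defined and intertwines with $\Sigma_{S,k}$ under $\Phi_k$. Once this compatibility is noted, the proof reduces to the single-line composition above.
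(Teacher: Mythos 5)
Your proof is correct and follows exactly the paper's route: the paper's own proof is the one-line instruction to apply Lemma \ref{autodecomp}, Lemma \ref{thetas} (via equation \eqref{olfa}), and equation \eqref{irwan}, which is precisely the two-step substitution you carry out. Your added remark on checking that the $\Sigma_{i,k}$ and $\Theta_i$ commute so that \eqref{olfa} follows from Lemma \ref{thetas} is a reasonable bit of diligence the paper leaves implicit.
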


\begin{proof}
Apply Lemma \ref{autodecomp}, Lemma \ref{thetas}, and equation \eqref{irwan}.
\end{proof}

\section{Skew-cyclic codes}
In this section, we characterize the skew-cyclic codes over the ring $B_k.$  We define first the skew-cyclic
codes or $\theta$-cyclic codes as a generalization of cyclic codes.

\begin{definition}
Let $\theta$ be an automorphism in $B_k.$  $C\subseteq B_k^n$ is called \emph{$\theta$-cyclic code} of length $n$ over $B_k$ if
the following two conditions hold:
\begin{itemize}
\item[(1)] $C$ is a $B_k$-submodule of $B_k^n,$

\item[(2)] $T_{\theta}(c)=(\theta(c_{n-1}),\theta(c_{0}),\theta(c_{1}),\dots,\theta(c_{n-2}))\in C,$
for any $c=(c_0,c_1,\dots,c_{n-1})\in C.$
\end{itemize}
\end{definition}

We note that the code $C \subseteq B_k^n$ is called \emph{linear} over $B_k$ if $C$ satisfies the property $(1)$ above.

We may also generalize the above definition to the quasi-cyclic one.
\begin{definition}
Let $\theta$ be an automorphism in $\mathbb{F}_{p^r}.$
$C\subseteq \mathbb{F}_{p^r}^n$ is called \emph{quasi-$\theta$-cyclic} code of length $n$ over
$\mathbb{F}_{p^r}$ of index $l$ if the following two conditions hold:
\begin{itemize}
\item[(1)] $C$ is an $\mathbb{F}_{p^r}$-submodule of $\mathbb{F}_{p^r}^n,$

\item[(2)]  $T_{\theta}^l(c)=(\theta(c_{n-l\pmod n}),\theta(c_{n-l+1\pmod n}),\theta(c_{n-l+2\pmod n}),\dots,
\theta(c_{n-1-l\pmod n}))\in C,$ for any $c=(c_0,c_1,\dots,c_{n-1})\in C.$
\end{itemize}
(In this case we have $l$ is a divisor of $n.$)
\end{definition}

Let $S,S_1,S_2\subseteq \{1,2,\dots,k\},$ where $|S_1|=|S_2|$ and let
$\Xi_{S,S_1,S_2}=\xi_{S,S_1,S_2,t}\circ T^{2^k}$ be a bijective map
on elements of $\mathbb{F}_{p^r}^{n2^k}$ where $T$ is the cyclic shift modulo
$n2^k$ and $\xi_{S,S_1,S_2,t}$ defined for all elements
\[
\mathbf{x}=(x_1^{1},\dots, x_{2^k}^{1},x_1^{2},\dots,
x_{2^k}^{2},\dots,x_1^{n},\dots, x_{2^k}^{n}) \in {\mathbb{F}}_2^{n2^k,}
\]
by
\[
\begin{aligned}\label{eq:induce2}
\xi_{S,S_1,S_2,t}({\mathbf{x}}) &= \xi_{S,S_1,S_2,t}((x_1^{1},\dots, x_{2^k}^{1},x_1^{2},\dots,
x_{2^k}^{2} \ldots,x_1^{n},\dots, x_{2^k}^{n})) \\
&= ( (\Sigma_{S,k}\circ\Gamma_{S_1,S_2,t})(\mathbf{x}^{1}),(\Sigma_{S,k}\circ\Gamma_{S_1,S_2,t})(\mathbf{x}^{2}),\dots,
(\Sigma_{S,k}\circ\Gamma_{S_1,S_2,t})(\mathbf{x}^{n}) )
\end{aligned}
\]
where $\mathbf{x}^j = (x_1^{j},\dots, x_{2^k}^{j}),$ for $1 \leq j \leq n.$ Since $T^{2^k}$ and $
\xi_{S,S_1,S_2,t}$ commute, $\Xi_{S,S_1,S_2,t}$ can be written as $T^{2^k} \circ \xi_{S,S_1,S_2,t}$ as well.
Now we are ready to provide the first characterization of $\theta$-cyclic codes over $B_k.$

\begin{lemma}[First characterization]
\label{lemchar}
Let $C$ be a code in $B_k^n$ and $\theta=\Theta_{S}\circ\Lambda_{S_1,S_2,t}$
be an automorphism in $B_k,$ for some $S,S_1,S_2\subseteq\{1,2,\dots,k\}$ and
an integer $t,$ where $0\leq t\leq r.$
Then, the code $\Phi_k(C)$ is fixed by the bijection $\Xi_{S,S_1,S_2,t} $
if and only if $C$ is a $\theta$-cyclic code.
\end{lemma}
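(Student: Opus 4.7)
The plan is to reduce the lemma to the intertwining identity $\Psi_k\circ\theta=(\Sigma_{S,k}\circ\Gamma_{S_1,S_2,t})\circ\Psi_k$ established in the previous Proposition. First, I would extend the Gray map coordinate-wise to a bijection $\Phi_k^{(n)}\colon B_k^n\to \mathbb{F}_{p^r}^{n\cdot 2^k}$ by
\[
\Phi_k^{(n)}(c_0,c_1,\dots,c_{n-1})=\bigl(\Phi_k(c_0),\Phi_k(c_1),\dots,\Phi_k(c_{n-1})\bigr),
\]
where each block $\Phi_k(c_j)\in\mathbb{F}_{p^r}^{2^k}$ is placed in positions $j\cdot 2^k+1,\dots,(j+1)\cdot 2^k$. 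Because $\Phi_k$ is an $\mathbb{F}_{p^r}$-linear bijection, so is $\Phi_k^{(n)}$, hence $C\subseteq B_k^n$ is a $B_k$-submodule (and in particular an $\mathbb{F}_{p^r}$-subspace) if and only if $\Phi_k^{(n)}(C)$ is an $\mathbb{F}_{p^r}$-linear code.

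Next, I would prove the central intertwining identity
\[
\Phi_k^{(n)}\circ T_\theta \;=\; \Xi_{S,S_1,S_2,t}\circ \Phi_k^{(n)}.
\]
For $c=(c_0,\dots,c_{n-1})\in B_k^n$, the left-hand side is $\bigl(\Phi_k(\theta(c_{n-1})),\Phi_k(\theta(c_0)),\dots,\Phi_k(\theta(c_{n-2}))\bigr)$. Applying the proposition block-by-block, $\Phi_k(\theta(c_j))=(\Sigma_{S,k}\circ\Gamma_{S_1,S_2,t})(\Phi_k(c_j))$, so the left-hand side equals $\xi_{S,S_1,S_2,t}\bigl(\Phi_k(c_{n-1}),\Phi_k(c_0),\dots,\Phi_k(c_{n-2})\bigr)$. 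The inner tuple is exactly $T^{2^k}(\Phi_k^{(n)}(c))$ because one cyclic shift in $B_k^n$ translates, under $\Phi_k^{(n)}$, to a cyclic shift by the block size $2^k$ in $\mathbb{F}_{p^r}^{n\cdot 2^k}$. Hence the right-hand side matches, using the definition $\Xi_{S,S_1,S_2,t}=\xi_{S,S_1,S_2,t}\circ T^{2^k}$.

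Finally, the two directions of the biconditional follow directly from this intertwining and the bijectivity of $\Phi_k^{(n)}$. If $C$ is $\theta$-cyclic, then $T_\theta(C)\subseteq C$, so $\Xi_{S,S_1,S_2,t}(\Phi_k^{(n)}(C))=\Phi_k^{(n)}(T_\theta(C))\subseteq \Phi_k^{(n)}(C)$, and since all sets are finite and $\Xi_{S,S_1,S_2,t}$ is a bijection, $\Phi_k^{(n)}(C)$ is in fact fixed. Conversely, if $\Phi_k^{(n)}(C)$ is fixed by $\Xi_{S,S_1,S_2,t}$, then applying $(\Phi_k^{(n)})^{-1}$ and the intertwining yields $T_\theta(C)=C$, so condition (2) of $\theta$-cyclicity holds; condition (1) holds since $C$ is given as a code.

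The main obstacle is purely bookkeeping: one must check carefully that (i) a single cyclic shift of the $n$-tuple in $B_k^n$ corresponds, after applying $\Phi_k$ to each coordinate and concatenating, exactly to the block-wise cyclic shift $T^{2^k}$ in $\mathbb{F}_{p^r}^{n\cdot 2^k}$, and (ii) the per-coordinate application of $\theta$ corresponds to $\xi_{S,S_1,S_2,t}$, which acts as $\Sigma_{S,k}\circ\Gamma_{S_1,S_2,t}$ on each of the $n$ blocks. Both facts are immediate from the definitions, so once the notation is aligned the argument is essentially a one-line consequence of the previous Proposition.
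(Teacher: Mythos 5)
Your proposal is correct and follows essentially the same route as the paper: both arguments reduce to the block-wise identity $\Phi_k(\theta(c_j))=(\Sigma_{S,k}\circ\Gamma_{S_1,S_2,t})(\Phi_k(c_j))$ from the preceding Proposition, together with the observation that one cyclic shift in $B_k^n$ corresponds to $T^{2^k}$ on $\mathbb{F}_{p^r}^{n2^k}$. Your packaging of this as a single intertwining identity $\Phi_k^{(n)}\circ T_\theta=\Xi_{S,S_1,S_2,t}\circ\Phi_k^{(n)}$ (plus the finiteness argument upgrading $\Xi(\Phi_k(C))\subseteq\Phi_k(C)$ to equality) is only a cleaner organization of the paper's two-direction computation, not a different method.
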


\begin{proof}
Let $C$ be a $\theta$-cyclic code and let $ \mathbf{y}= (y_1,y_2,\dots,y_{n2^k}) \in \Phi_k(C).$  That is, there exists
$ \mathbf{x}=(x_1,x_2,\dots,x_{n})\in C $
such that     $ \y=  (\Phi_k(x_1),\Phi_k(x_2),\dots,\Phi_k(x_n))$
and  $\Phi_k(x_i)\in {\mathbb{F}}_{p^r}^{2^k}.$

We have
\[
\begin{aligned}
\Xi_{S,S_1,S_2,t}(\y) & =  \xi_{S,S_1,S_2,t} \circ T^{2^{k}}(  (\Phi_k(\x_1),\Phi_k(\x_2),\dots,\Phi_k(\x_n)))\\
      & =  \xi_{S,S_1,S_2,t} (  (\Phi_k(x_n),\Phi_k(x_1),\ldots,\Phi_k(x_{n-1})))\\
  & = ((\Sigma_{S,k}\circ\Gamma_{S_1,S_2,t})(\Phi_k(x_n)),  (\Sigma_{S,k}\circ\Gamma_{S_1,S_2,t})(\Phi_k(x_1)),\\
  & \quad \ldots, (\Sigma_{S,k}\circ\Gamma_{S_1,S_2,t})(\Phi_k(x_{n-1})))\\
  & = (\Phi_k(\theta (x_n)),  \Phi_k(\theta (x_1)),\dots,\Phi_k(\theta (x_{n-1})))\\
  & = \Phi_k(\theta (\x))\in \Phi_k(C),
\end{aligned}
\]
since $C$ is  a $\theta$-cyclic code.

Let   $C'$ be a code  over $\F_{p^r}^{n2^k}$
that is fixed  by  the  permutation $ \Xi _{S,S_1,S_2,t}$
and let $ \x= (x_1,x_2,\dots,x_{n}) \in \Phi_k^{-1} (C').$  Then, there exists
\[
\y= ( x_{1}^{1},\dots,
x_{1}^{2^k}, x_{2}^{1},\dots,x_{1}^{2^k},\dots, x_{n}^{1},\dots,
x_{n}^{2^k})\in C',
\]
such that
\[
\y= \Phi_{k}((x_1,x_2,\dots,x_{n})) =
(\Phi_k(x_1),\Phi_k(x_2),\dots,\Phi_k(x_n)).
\]
Since $C'$ is fixed by  the  permutation $ \Xixi $ and
$\Phi_k(x_i)\in {\mathbb{F}}_{p^r}^{2^k},$ we have that for $\Xixi(\y) \in C':$
\[
\begin{aligned}
\Xixi(\y) & =  \xixi \circ T^{2^{k}}((\Phi_k(x_1),\Phi_k(x_2),\ldots,\Phi_k(x_n)))\\
  &= \xixi ((\Phi_k(x_n),\Phi_k(x_1),\ldots,\Phi_k(x_{n-1})))\\
  &= ((\teta)(\Phi_k(x_n)),  (\teta)(\Phi_k(x_1)),\\
  &\quad \ldots,(\teta)\Phi_k(x_{n-1}))\\
  &= (\Phi_k(\theta(x_n)),\Phi_k(\theta(x_1)),\dots,\Phi_k(\theta(x_{n-1}))\\
  &= \Phi_k(\theta(x)).
\end{aligned}
\]
It follows that $\Phi_k(\theta(x)) \in C'$ and thus
  $\Phi_{k}^{-1}(C')$ is a $\theta$-cyclic code .
\end{proof}

Next we provide the second characterization of $\theta$-cyclic codes over $B_k.$ For this purpose,
let $\overline{\Psi}_k$ be the map extended from $\Psi_k,$ where for any $\mathbf{a}=(a_1,\dots,a_n)$ in $B_k^n,$ we have
\[
\overline{\Psi}_k(\mathbf{a})=(\Psi_k(a_1),\dots,\Psi_k(a_n)).
\]
Before providing the second characterization, we need the following lemma.

\begin{lemma}
Let $C$ be a subset of $B_k^n.$ Then, $C$ is a $B_k$-linear code with length $n$ if and only if there exist linear codes,
 $C_1,\dots,C_{2^k},$ over $\mathbb{F}_{p^r}$ such that
\[C=\overline{\Psi}_k^{-1}(C_1,\dots,C_{2^k}).\]
\label{linear}
\end{lemma}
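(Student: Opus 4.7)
The plan is to exploit the fact that $\Psi_k$ is not merely an $\mathbb{F}_{p^r}$-linear bijection but a \emph{ring isomorphism} from $B_k$ onto $\mathbb{F}_{p^r}^{2^k}$ equipped with componentwise operations. Granting this, $B_k$-submodules of $B_k^n$ correspond bijectively, via $\overline{\Psi}_k$, to $\mathbb{F}_{p^r}^{2^k}$-submodules of $(\mathbb{F}_{p^r}^{2^k})^n$, and the latter are automatically direct products of $\mathbb{F}_{p^r}$-subspaces of $\mathbb{F}_{p^r}^n$, one for each of the $2^k$ coordinate slots.

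The first step is to establish the ring-isomorphism claim. I would introduce the primitive orthogonal idempotents $e_T := \prod_{i\in T} v_i \prod_{j\notin T}(1-v_j)$ indexed by $T\in\mathcal{H}$. The identities $v_i^2=v_i$ and $v_i(1-v_i)=0$ immediately give $e_T e_{T'}=0$ for $T\neq T'$, $e_T^2=e_T$, and $\sum_T e_T = 1$, so $B_k \cong \prod_T B_k e_T$. A short calculation shows $v_H e_T = e_T$ when $H\subseteq T$ and $0$ otherwise, so for $a=\sum_H \alpha_H v_H$ the coefficient of $e_T$ in the idempotent expansion of $a$ is precisely $\sum_{H\subseteq T}\alpha_H$, which is the $T$-th coordinate of $\Psi_k(a)$. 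Thus $\Psi_k$ is the CRT isomorphism, and in particular is multiplicative; this is consistent with Proposition~\ref{formmaxideal}, which identifies the maximal ideals of $B_k$ as the kernels of these component projections.

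With this in hand, the forward direction runs as follows. Given $B_k$-linear $C$, define $C_T\subseteq\mathbb{F}_{p^r}^n$ as the image of $\overline{\Psi}_k(C)$ under the $T$-th slot-wise projection; these are clearly $\mathbb{F}_{p^r}$-subspaces. Closure of $C$ under multiplication by $e_T$ translates, via $\overline{\Psi}_k$, into closure of $\overline{\Psi}_k(C)$ under the operation of zeroing out every slot except the $T$-th one. Summing such one-slot pieces over all $T$ shows that every interleaved element of the product $(C_1,\dots,C_{2^k})$ lies in $\overline{\Psi}_k(C)$, giving $C=\overline{\Psi}_k^{-1}(C_1,\dots,C_{2^k})$. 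The reverse direction is then immediate: if each $C_T$ is $\mathbb{F}_{p^r}$-linear, the interleaved product $(C_1,\dots,C_{2^k})$ is manifestly an $\mathbb{F}_{p^r}^{2^k}$-submodule of $(\mathbb{F}_{p^r}^{2^k})^n$, whose pullback under the ring isomorphism $\overline{\Psi}_k$ is automatically a $B_k$-submodule of $B_k^n$.

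The main obstacle is the algebraic identification in the first step: verifying that the combinatorially defined partial-sum map $\Psi_k$ genuinely coincides with the CRT isomorphism, and that $B_k$-linearity of $C$ is equivalent to closure under multiplication by every $e_T$ together with $\mathbb{F}_{p^r}$-linearity. An alternative to the idempotent computation is induction on $k$ using the recursive formula for $\Phi_k$, where one checks that the single step $\phi_k(\alpha+\beta v_k) = (\alpha,\alpha+\beta)$ is multiplicative on $B_{k-1}+v_k B_{k-1}$. Once that translation is set up, the product decomposition of $\overline{\Psi}_k(C)$ follows by chasing the idempotent action, and no further delicate ring-theoretic work is needed.
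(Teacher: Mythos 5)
Your proposal is correct, and at bottom it follows the same strategy as the paper: treat $\overline{\Psi}_k$ as a multiplicative map onto the componentwise product ring and argue slot by slot. The difference is that you supply two justifications the paper omits. First, the paper's proof of the converse uses the identity $\overline{\Psi}_k(\beta c_1)=\Psi_k(\beta)\overline{\Psi}_k(c_1)$ without comment; your idempotent computation with $e_T=\prod_{i\in T}v_i\prod_{j\notin T}(1-v_j)$, showing $v_He_T=e_T$ for $H\subseteq T$ and $0$ otherwise, is exactly what is needed to see that the partial-sum map $\Psi_k$ is the CRT isomorphism and hence multiplicative. Second, in the forward direction the paper simply asserts that codes $C_1,\dots,C_{2^k}$ with $C=\overline{\Psi}_k^{-1}(C_1,\dots,C_{2^k})$ exist ``as $\overline{\Psi}_k$ is a bijection''; a bijection alone does not make the image of a set a direct product, and your observation that $B_k$-linearity gives closure under multiplication by each $e_T$ (i.e.\ under zeroing out all slots but one), so that the one-slot pieces can be recombined, is precisely the missing step. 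So your route is the paper's argument carried out carefully rather than a genuinely different one, and it buys a complete proof of the direct-product decomposition that the paper takes for granted.
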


\begin{proof}
$(\Longrightarrow)$ Given $C,$ there exist codes $C_1,\dots,C_{2^k}$ such that $C=\overline{\Psi}_k^{-1}(C_1,\dots,C_{2^k}),$
(as $\overline{\Psi}_k$ is a bijection).
For any $C_i,$ we want to show that it is really a linear code over $\mathbb{F}_{p^r}.$ For any $c_1,c_2\in C_i,$ consider
$c_j'=\overline{\Psi}_k^{-1}(0,\dots,0,c_j,0,\dots,0)\in C,$ for $j=1,2.$ Since $C$ is linear, we have
$\overline{\Psi}_k(c_1'+c_2')=(0,\dots,0,c_1+c_2,0,\dots,0)\in (C_1,\dots, C_s).$ Also, for any $\alpha\in \mathbb{F}_{p^r},$
there exists $\alpha'\in B_k$ such that
$\Psi_k(\alpha')=(0,\dots,\alpha,0,\dots,0)\in \mathbb{F}_{p^r}^{2^k},$ where $\alpha$ lies in the $i$-th coordinate. Then,
$\overline{\Psi}_k^{-1}(0,\dots,0,\alpha c_1,0,\dots,0)=\alpha'c_1'\in C.$ So, $\alpha c_1\in C_i$ for any $\alpha\in \mathbb{F}_{p^r}$
and $c_1\in C_i.$ Therefore,
$C_i$ is a linear code over $\mathbb{F}_{p^r}.$\\[0.25cm]
$(\Longleftarrow)$ For any $c_1,c_2\in C,$ since $C_i$ is linear for all $i,$
we have $\overline{\Psi}_k(c_1+c_2)\in (C_0,\dots,c_s).$ So, $c_1+c_2\in C.$ Also, for
any $\beta\in B_k,$
we have $\overline{\Psi}_k(\beta c_1)=\Psi_k(\beta)\overline{\Psi}_k(c_1)\in (C_1,\dots,C_s),$
which means $\beta c_1\in C$
for any $\beta\in B_k$ and $c_1\in C,$ as we hope.
\end{proof}

Let $\tilam$ be a permutation on $\{1,2,\dots,2^k\}$ induced by
$\Theta_S\circ\Lambda_{S_1,S_2,t},$ and $\ord(\tilam)$ be the order of $\tilam.$
The following theorem also gives a
characterization for $\theta$-cyclic codes over the ring $B_k.$

\begin{theorem}[Second characterization]
A linear code $C$ over $B_k$ is $\theta$-cyclic of length $n$ if and only if
there exist quasi-$\tilde{\theta}$-cyclic codes $C_1,C_2,\dots,C_{2^k}$
of length $n$ over $\mathbb{F}_{p^r}$
with index $\ord(\tilam),$ such that
\[
C=\overline{\Psi}_k^{-1}(C_1,C_2,\dots, C_{2^k})
\]
where $\tilde{\theta}=\phi^{t\ord(\tilam) },$ for some $t$ as in Lemma \ref{autodecomp},
with $\phi$ is the Frobenius automorphism in
$\mathbb{F}_{p^r},$ and
$T_{\tilde{\theta}}(C_i)\subseteq C_j,$ where $j\in S\cup S_2,$ for all
$i=1,2,\dots,2^k.$
\label{charthetacyclic}
\end{theorem}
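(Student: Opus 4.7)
The plan is to combine the structural decomposition of $\theta$ from Lemma~\ref{autodecomp} with the linearity decomposition of Lemma~\ref{linear}, and then track how iterating the $\theta$-cyclic shift interacts with the induced permutation $\tilam$ on the $2^{k}$ component codes.

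First, since any $\theta$-cyclic code is linear over $B_{k}$, I would invoke Lemma~\ref{linear} to write $C = \overline{\Psi}_k^{-1}(C_1,\ldots,C_{2^k})$ for uniquely determined linear codes $C_i \subseteq \mathbb{F}_{p^r}^{n}$. The task is then to re-express the condition $T_{\theta}(C)\subseteq C$ as a simultaneous condition on the $C_i$'s. To do this, I would transfer the cyclic shift to the $\mathbb{F}_{p^r}$-side: using Proposition just before this theorem (the identity $\Psi_k \circ \theta = (\Sigma_{S,k}\circ \Gamma_{S_1,S_2,t})\circ \Psi_k$), the action of $T_\theta$ on $B_k^n$ corresponds under $\overline{\Psi}_k$ to first applying a block cyclic shift on the $n$ blocks of size $2^k$ and then applying $\Sigma_{S,k}\circ \Gamma_{S_1,S_2,t}$ coordinate-wise within each block. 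At the level of the component codes, this says a single shift of $C$ permutes the indices $\{1,\ldots,2^k\}$ according to the permutation $\tilam$ induced by $\Theta_S \circ \Lambda_{S_1,S_2,t}$, and cyclically shifts each $C_i$ while twisting its entries by a $p^t$-Frobenius.

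Next, I would iterate the shift $\ell := \ord(\tilam)$ times. By definition of the order, the permutation $\tilam^\ell$ is the identity on $\{1,\ldots,2^k\}$, so after $\ell$ applications each $C_i$ is sent back to itself; the coordinates within each block undergo a pure cyclic shift by $\ell$ positions, and the Frobenius twist accumulates to $\phi^{t\ell}=\tilde{\theta}$. This is exactly the definition of a quasi-$\tilde{\theta}$-cyclic code of index $\ell$, so each $C_i$ must be such a code; conversely, if every $C_i$ is quasi-$\tilde{\theta}$-cyclic of index $\ell$ and the $C_i$'s are linked by $T_{\tilde\theta}(C_i)\subseteq C_{\tilam(i)}$, then a single $T_\theta$-shift clearly stabilises $\overline{\Psi}_k^{-1}(C_1,\ldots,C_{2^k})$. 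The indexing condition $j\in S\cup S_2$ is just the reformulation of ``$j = \tilam(i)$'': from Lemma~\ref{autodecomp}, $\tilam$ is the composition of the permutation coming from $\Theta_S$ (supported on $S$) and the permutation coming from $\lambda_{S_1,S_2}$ (supported on $S_2$), so the image indices lie in $S\cup S_2$.

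The main bookkeeping obstacle will be to show carefully that the permutation on $\{1,\ldots,2^k\}$ induced by $\Sigma_{S,k}\circ \Gamma_{S_1,S_2,t}$, acting on the component positions of $\overline{\Psi}_k$, really equals $\tilam$, and that applying it $\ell$ times returns the indices to the identity while producing the clean Frobenius power $\phi^{t\ell}$ on the entries. I would do this by an explicit coordinate chase on a basis vector $v_H$, using formula~\eqref{eq:induce2} and equation~\eqref{irwan} to identify the permutation on indices with $\tilde\lambda_{S_1,S_2}$ conjugated by the $\Theta_S$-induced swap and to verify that the $p^t$-power on entries commutes through the iteration. Once this translation is established, both implications are routine, and the converse direction (assembling a $\theta$-cyclic $C$ from quasi-$\tilde{\theta}$-cyclic $C_i$'s satisfying the linking relation) follows by reversing the same chain of equivalences.
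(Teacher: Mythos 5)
Your proposal follows essentially the same route as the paper: decompose $C$ via Lemma~\ref{linear} into component codes $C_1,\dots,C_{2^k}$, transfer $T_\theta$ through the Gray map so that a single shift permutes the component indices by $\tilam$ while twisting entries by $\phi^t$, and iterate $\ord(\tilam)$ times to recover the quasi-$\tilde\theta$-cyclic condition on each $C_i$, with the converse obtained by reversing the chain. Your version is, if anything, more explicit than the paper's about the bookkeeping identifying the induced index permutation with $\tilam$, but the underlying argument is the same.
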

\begin{proof}
$(\Longrightarrow)$ By Proposition \ref{linear},
we can find codes over $\mathbb{F}_{p^r},$ $C_1,C_2,\dots,C_{2^k},$ such that,
\[C=\overline{\Psi}_k^{-1}(C_1,C_2,\dots,C_{2^k}).\]
For any $c_i\in C_i,$ let $c_i=(\alpha_1,\dots, \alpha_{n}).$ If,
$c=\overline{\Psi}_k^{-1}(0,\dots,0,c_i,0,\dots,0),$ then
\[
\left(\alpha_1v_{H_i}-\sum_{H\in\mathcal{H},~H\supsetneq H_i}\alpha_1v_H,\dots,
\alpha_nv_{H_i}-\sum_{H\in\mathcal{H},~H\supsetneq H_i}\alpha_nv_H\right).
\]
So, if we consider
\[
\overline{\Psi}_k(T_{\theta}^{t_1}(c))=(0,\dots,0,T_{\tilde{\theta}}^{t_1}(c_i),0,\dots,0),
\]
then we have
$T_{\tilde{\theta}}(c_i)$ is in $C_j,$ where $j\in S\cup S_2.$
By continuing this process, we have $T^{\ord(\tilam)}_{\tilde{\theta}}(c_i)\in C_i,$
which means, $C_i$ is quasi-$\tilde{\theta}$-cyclic code over $\mathbb{F}_{p^r}$
with index $\ord(\tilam),$ for all
$i=1,\dots,2^k.$\\

$(\Longleftarrow)$ For any $c\in C,$ we can see that
$\overline{\Psi}_k(c)\in (C_1,\dots,C_{2^k}).$
Since $C_i$ is quasi-$\tilde{\theta}$-cyclic
code over $\mathbb{F}_{p^r}$
with index $\ord(\tilam),$ for all
$i=1,\dots,2^k,$ $C_1,$
and $T_{\tilde{\theta}}^{t_1}(C_i)\subseteq C_j,$ where $j\in S\cup S_2,$ for all
$i=1,2,\dots,2^k,$ where $1\leq t_1\leq 2^k.$
Then we have  $T_{\theta}(c)=\overline{\Psi}_k^{-1}(T_{\tilde{\theta}}(\Psi_k(c)))\in C,$ as we hope.
\end{proof}

Theorem~\ref{charthetacyclic} gives us an algorithm to construct skew-cyclic codes over the ring $B_k$ as follows.
\vspace{0.6cm}
\begin{alg}
Given $n,$ the ring $B_k,$ and an automorphism $\theta.$

\begin{itemize}
 \item[(1)] Decompose $\theta$ into $\theta=\Theta_{S}\circ\Lambda_{S_1,S_2,t}.$

\item[(2)] Determine $\ord(\tilam)$ and $\tilde{\theta}=\theta|_{\mathbb{F}_{p^r}}=\phi^t,$ where
	$\phi$ is the Frobenius automorphism in $\mathbb{F}_{p^r}.$

\item[(3)] Choose quasi-$\tilde{\theta}$-cyclic codes over $\mathbb{F}_{p^r},$ say $C_1,\dots,C_{2^k},$ such that
\[
T_{\tilde{\theta}}^{t_1}(C_i)\subseteq C_{j},
\]where $j\in S\cup S_2,$ for all $i=1,2,\dots,2^k.$

\item[(4)] Calculate $C=\overline{\Psi}_k^{-1}(C_1,\dots,C_{2^k}).$

\item[(5)] $C$ is a $\theta$-cyclic code over the ring $B_k.$
\end{itemize}
\end{alg}

\section{Skew-polynomial rings, self-dual codes, and optimal codes}

At the end of this section, we construct some optimal self-dual $\theta$-cyclic codes.
For the sake of construction, we provide a connection between $\theta$-cyclic codes
and  polynomial rings over $B_k.$ Also, we characterize self-dual $\theta$-cyclic codes.

Let $\theta$ be an automorphism in $B_k.$ We define a polynomial ring $B_k[x;\theta]$
with usual addition, and its multiplication defined as
\[
ax^i*bx^j=a\theta^i(b)x^{i+j}.
\]
We can see from the above multiplication that $B_k[x;\theta]$ is not a commutative ring in general.
We call this ring \emph{skew polynomial ring}.

Let $R=B_k[x;\theta]/(x^n-1)$ and define a left action of $B_k[x;\theta]$ on $R$ by
\[
h(x)*a:=h(x)*f(x)+(x^n-1),
\]
for $a=f(x)+(x^n-1) \in R$ and $h(x)\in B_k[x;\theta].$  It is easy to see that this
left action is well-defined and $R$ is a left module over $B_k[x;\theta].$

Every elements in $B_k^n$ could be associated to a polynomial over $B_k$ by the following map,
\[
\begin{array}{llll}
   \tau : & B_k^n & \longrightarrow & R \\
	     & c=(c_0,c_1,\dots,c_{n-1}) & \longmapsto & \sum_{i=0}^{n-1} c_ix^i
\end{array}
\]
Moreover, $\tau$ is a bijection between $B_k^n$ and $R.$ Then, we have the following result
({\it c.f.} Theorem 17 in \cite{irw}).

\begin{theorem}
A linear code $C$ over $B_k$ is $\theta$-cyclic if and only if $\tau(C)$ is a left $B_k[x;\theta]$-submodule
of $B_k[x;\theta]/(x^n-1).$
\end{theorem}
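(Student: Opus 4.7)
The entire argument rests on a single observation: under $\tau$, left multiplication by $x$ in $R = B_k[x;\theta]/(x^n-1)$ realizes exactly the skew-cyclic shift $T_\theta$. So the first step is to verify the identity
\[
x*\tau(c) \equiv \tau(T_\theta(c)) \pmod{x^n-1}
\]
for every $c=(c_0,\ldots,c_{n-1}) \in B_k^n$. This is a direct application of the skew multiplication rule $ax^i*bx^j = a\theta^i(b)x^{i+j}$: one computes $x*\tau(c) = \sum_{i=0}^{n-1}\theta(c_i)x^{i+1}$, and the top term $\theta(c_{n-1})x^n$ reduces to $\theta(c_{n-1})$ modulo $x^n-1$, giving precisely $\tau(T_\theta(c)) = \theta(c_{n-1}) + \theta(c_0)x + \cdots + \theta(c_{n-2})x^{n-1}$.

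Given this identity, the forward direction is immediate. If $C$ is $B_k$-linear and $\theta$-cyclic, then $\tau(C)$ is closed under addition and under left multiplication by scalars from $B_k$ (because $\tau$ is $B_k$-linear), and closed under left multiplication by $x$ (because $T_\theta(C) \subseteq C$ translates via the identity into $x*\tau(C) \subseteq \tau(C)$). Since every element of $B_k[x;\theta]$ is a finite $B_k$-linear combination of powers of $x$, iterating these two closures shows that $\tau(C)$ is stable under the entire left action, i.e.\ is a left $B_k[x;\theta]$-submodule of $R$.

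For the converse, suppose $\tau(C)$ is a left $B_k[x;\theta]$-submodule. In particular $x*\tau(c) \in \tau(C)$ for every $c \in C$, so by the identity $\tau(T_\theta(c)) \in \tau(C)$, and applying the bijection $\tau^{-1}$ gives $T_\theta(c) \in C$. This is precisely condition (2) in the definition of a $\theta$-cyclic code, while condition (1) is the standing linearity hypothesis.

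The only genuine technical point, and the reason the excerpt takes pains to set it up in advance, is that the left action of the non-commutative ring $B_k[x;\theta]$ on the quotient $R$ must be well-defined; once that is taken for granted the proof reduces to the one-line identity above, so I do not anticipate any serious obstacle beyond keeping careful track of where $\theta$ is applied.
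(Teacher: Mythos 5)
Your proof is correct, and it is the standard argument: the key identity $x*\tau(c)=\tau(T_\theta(c))$ in $B_k[x;\theta]/(x^n-1)$, combined with $B_k$-linearity of $\tau$ and the fact that every skew polynomial is a $B_k$-combination of powers of $x$, gives both directions. The paper itself omits the proof entirely (it only cites Theorem 17 of the $A_k$ paper \cite{irw}), and your argument is precisely the one that reference uses, so there is nothing to compare beyond noting that you have filled in the omitted details correctly.
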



For generators of $\theta$-cyclic codes, we have the following result.
\begin{proposition}
Let $C=\overline{\Psi}_k^{-1}(C_1,\dots,C_{2^k})$ be $\theta$-cyclic codes over $B_k,$
where $C_1,\dots,C_{2^k}$ are codes over $\mathbb{F}_{p^r}.$
If $C_i=\langle g_{1_i}(x),\dots,g_{m_i}(x)\rangle,$ for all $i=1,\dots,2^k,$ then
\[
C=\langle g_{1_1}(x),\dots,g_{m_1}(x),\dots,g_{1_s}(x),\dots,g_{m_s}(x)\rangle
\]in some sense.
\label{gen1}
\end{proposition}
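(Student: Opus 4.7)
The plan is to use the idempotent decomposition of $B_k$. By Proposition~\ref{formmaxideal} and the Chinese Remainder Theorem, the map $\Psi_k:B_k\to\mathbb{F}_{p^r}^{2^k}$ is a ring isomorphism, and the $i$-th standard basis vector pulls back to a primitive idempotent $e_i\in B_k$ (explicitly a product of suitable $w_j\in\{v_j,1-v_j\}$). These idempotents satisfy $e_ie_j=\delta_{ij}e_i$, $\sum_i e_i=1$, and every $a\in B_k$ decomposes uniquely as $a=\sum_i e_ia_i$ with $e_ia_i\in e_iB_k\cong\mathbb{F}_{p^r}$. These are what make ``in some sense'' precise: the proper generators of $C$ inside $R=B_k[x;\theta]/(x^n-1)$ should be $e_ig_{j_i}(x)$ rather than the bare $g_{j_i}(x)$.

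First I would check that, under the identification $\tau$, the lift $\overline{\Psi}_k^{-1}(C_1,\dots,C_{2^k})$ sits inside $R$ as
\[
\tau(C)=\sum_{i=1}^{2^k} e_i\cdot\tau(C_i),
\]
where each $\tau(C_i)$ is embedded into $R$ through $\mathbb{F}_{p^r}[x]/(x^n-1)\hookrightarrow B_k[x;\theta]/(x^n-1)$ and then multiplied on the left by $e_i$. This amounts to unwinding the definitions of $\Psi_k$ and $\tau$ together with the orthogonality of the idempotents; concretely, $\Psi_k(e_ia)$ is the vector with the $i$-th component equal to $\Psi_k(a)_i$ and all others zero, so the direct sum on the $B_k^n$ side corresponds coordinate by coordinate to the direct product on the $(\mathbb{F}_{p^r}^n)^{2^k}$ side.

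Next, since $\tau(C_i)$ is generated by $g_{1_i}(x),\dots,g_{m_i}(x)$ over the appropriate scalar ring, the left $B_k[x;\theta]$-submodule of $R$ generated by
\[
\bigl\{e_ig_{j_i}(x):1\le i\le 2^k,\ 1\le j\le m_i\bigr\}
\]
contains every $e_ig_{j_i}(x)$ and therefore (on closing under the scalar action $e_i\mathbb{F}_{p^r}$) contains each $e_i\tau(C_i)$, so it contains and equals $\tau(C)$.

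The main obstacle will be the interaction between $\theta$ and the idempotents. Since $\theta$ permutes the $e_i$ via $\tilam$ and twists $\mathbb{F}_{p^r}$-scalars by a Frobenius power, the left action of $x\in B_k[x;\theta]$ on $e_ig_{j_i}(x)$ produces $\theta(e_i)\cdot x\,g_{j_i}(x)=e_{\tilam^{-1}(i)}\cdot xg_{j_i}(x)$, shifting the component to a different index. To see that the submodule is still exactly $\tau(C)$ and no new generators arise, I would invoke the compatibility $T_{\tilde\theta}^{t_1}(C_i)\subseteq C_j$ for $j\in S\cup S_2$ supplied by Theorem~\ref{charthetacyclic}; this compatibility is precisely what ensures that the orbits of the listed generators under the skew-polynomial action close up within the prescribed direct sum, so the list of $e_ig_{j_i}(x)$'s already generates $\tau(C)$ as a left $B_k[x;\theta]$-module.
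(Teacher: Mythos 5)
Your proposal is correct and rests on the same underlying idea as the paper's proof --- pull a codeword back through $\overline{\Psi}_k$ component by component and exhibit it as a combination of the $g_{j_i}$'s with coefficients supported on the relevant component --- but the packaging is genuinely different and, frankly, better. The paper works in the $v_H$-basis: it writes out $\overline{\Psi}_k^{-1}$ explicitly and displays $c(x)$ as $\sum_i v_{H_i}\bigl(\sum_k\alpha_{ki}(x)g_{ik}-\cdots\bigr)$, a M\"obius-inversion-style expansion, and that expansion is all the phrase ``in some sense'' amounts to. You instead diagonalize: you identify $\Psi_k$ as the CRT isomorphism $B_k\cong\mathbb{F}_{p^r}^{2^k}$ (which is correct --- the components of $\Psi_k$ are the evaluations $v_j\mapsto 0,1$, i.e.\ the reductions modulo the maximal ideals of Proposition~\ref{formmaxideal}), take the primitive orthogonal idempotents $e_i$ as coefficients, and declare the true generators to be $e_ig_{j_i}(x)$ with $\tau(C)=\sum_i e_i\,\tau(C_i)$. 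This buys two things. First, it makes ``in some sense'' precise. Second, it forces you to confront the point the paper silently skips: in $B_k[x;\theta]$ left multiplication by $x$ permutes the idempotents ($x\ast e_i=\theta(e_i)x$) and Frobenius-twists the $\mathbb{F}_{p^r}$-coefficients, so the left submodule generated by the $e_ig_{j_i}(x)$ stays inside $\sum_i e_i\,\tau(C_i)$ only because of the compatibility $T_{\tilde{\theta}}(C_i)\subseteq C_j$ supplied by Theorem~\ref{charthetacyclic}; the paper's proof writes $\alpha_{kj}(x)g_{jk}$ as though the multiplication were commutative and never verifies this closure. The only loose ends in your sketch are bookkeeping, not gaps: the direction of the induced permutation ($\tilde{\lambda}_{S_1,S_2}$ versus its inverse) and the exact Frobenius power in $\tilde{\theta}$ must be fixed consistently with the paper's (themselves inconsistent) conventions.
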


\begin{proof}
For any $c(x)\in C,$ there exist $c_i(x)\in C_i,$
where $1\leq i\leq 2^k,$ such that $c(x)=\overline{\Psi}_k^{-1}(c_1(x),\dots,c_{2^k}(x)).$ Now, let
\[
c_j(x)=\sum_{k=1}^{m_j}\alpha_{kj}(x)g_{jk}
\]
for all $j=1,\dots,2^k,$ then we have
\[
\begin{aligned}
c(x) & =  v_{H_1}\left(\sum_{k=1}^{m_1}\alpha_{k1}(x)g_{1k}\right)\\
     &   +\dots+v_{H_i}\left(\sum_{k=1}^{m_i}\alpha_{ki}(x)g_{ik} -\sum_{H_j\subseteq H_i}
     \left(\sum_{k=1}^{m_j}\alpha_{kj}(x)g_{jk}\right)\right) \\
     &   +\dots+v_{H_{2^k}}\left(\sum_{k=1}^{m_{2^k}}\alpha_{k2^k}(x)g_{2^k k}-\sum_{j=1}^s
     \left(\sum_{k=1}^{m_j}\alpha_{kj}(x)g_{jk}
	  \right)\right)
\end{aligned}
\]
as we hope.
\end{proof}

For $c_1=(c_{10},\dots,c_{1n-1}),c_2=(c_{20},\dots,c_{2n-1})\in B_k^n,$ define
\[
\langle c_1,c_2\rangle=\sum_{i=0}^{n-1}c_{1i}c_{2i}
\]
which we call {\it Euclidean product}. For a  code $C\subseteq B_k^n,$ we define
$C^\bot =\{b\in B_k^n|\langle b,c\rangle=0,\forall c\in C\}.$ If $C\subseteq C^\bot,$ then we call $C$
{\it self-orthogonal}, and if $C=C^\bot,$ then $C$ called {\it Euclidean self-dual}.
For a linear Euclidean self-dual code over $B_k,$ we have the characterization below
(follows from \cite[Theorem 6.4]{Dough3}).

\begin{proposition}
Let $C=\overline{\Psi}_k^{-1}(C_1,\dots,C_{2^k}).$ $C$ is a self-dual code over $B_k$
if and only if $C_1,\dots,C_{2^k}$ are also self-dual codes
over $\mathbb{F}_{p^r}$
\label{chareuc}
\end{proposition}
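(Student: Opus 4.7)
The plan is to reduce the statement to a coordinate-wise fact over $\mathbb{F}_{p^r}$ by exploiting that $\Psi_k\colon B_k \to \mathbb{F}_{p^r}^{2^k}$, equipped with componentwise operations on the target, is not merely a bijection but a ring isomorphism. Writing an arbitrary element as $a=\sum_{H\in\mathcal{H}}\alpha_H v_H$ and using the product rule $v_H v_{H'}=v_{H\cup H'}$ (which follows from each $v_i$ being idempotent and commuting), a direct calculation shows
\[
\Psi_k(ab)_i \;=\; \sum_{K\subseteq H_i}\ \sum_{H\cup H'=K}\alpha_H\beta_{H'} \;=\; \Big(\sum_{H\subseteq H_i}\alpha_H\Big)\Big(\sum_{H'\subseteq H_i}\beta_{H'}\Big) \;=\; \Psi_k(a)_i\,\Psi_k(b)_i,
\]
so $\Psi_k$ is multiplicative. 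This is, essentially, the Chinese Remainder decomposition associated with the $2^k$ maximal ideals of Proposition~\ref{formmaxideal}.

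Once multiplicativity is in hand, I would extend coordinatewise to obtain, for every $c,c'\in B_k^n$, the identity
\[
\Psi_k(\langle c,c'\rangle)_j \;=\; \langle (\overline{\Psi}_k(c))_j,\ (\overline{\Psi}_k(c'))_j\rangle, \qquad j=1,\ldots,2^k,
\]
where $(\overline{\Psi}_k(c))_j\in\mathbb{F}_{p^r}^n$ is the $j$-th projection. Since $\Psi_k$ is injective, this gives $\langle c,c'\rangle=0$ in $B_k$ if and only if each of the $2^k$ Euclidean inner products on the right vanishes; this equivalence is the single lever on which everything that follows rests.

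For the direction ($\Longrightarrow$), assume $C=C^{\perp}$. Given $a,b\in C_j$, pick $\tilde a,\tilde b\in C$ with $j$-th components $a,b$ (possible since $\overline{\Psi}_k(C)=C_1\times\cdots\times C_{2^k}$); then $\langle\tilde a,\tilde b\rangle=0$ forces $\langle a,b\rangle=0$ by the identity above, so $C_j\subseteq C_j^{\perp}$. Conversely, given $d\in C_j^{\perp}$, form $\tilde d=\overline{\Psi}_k^{-1}(0,\ldots,0,d,0,\ldots,0)$ with $d$ placed in the $j$-th slot; for any $c\in C$ the $m$-th component of $\Psi_k(\langle\tilde d,c\rangle)$ is $0$ when $m\neq j$ and equals $\langle d,(\overline{\Psi}_k(c))_j\rangle=0$ when $m=j$, so $\langle\tilde d,c\rangle=0$. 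Hence $\tilde d\in C^{\perp}=C$, which forces $d\in C_j$. The direction ($\Longleftarrow$) uses the same two constructions: if each $C_j$ is self-dual, coordinate-wise vanishing of inner products yields $C\subseteq C^{\perp}$; and any $d\in C^{\perp}$ satisfies $\langle d_j,c_j\rangle=0$ for every $c_j\in C_j$ (lift $c_j$ to a $C$-element supported only on the $j$-th slot), forcing $d_j\in C_j^{\perp}=C_j$ and therefore $d\in C$.

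The main obstacle is Step~1, the verification that $\Psi_k$ is multiplicative; the remainder is a routine translation of the classical coordinate-wise self-duality argument for codes over product rings. The idempotent-lift used in both directions is automatic from the hypothesis $C=\overline{\Psi}_k^{-1}(C_1\times\cdots\times C_{2^k})$, so no separate surjectivity argument is required.
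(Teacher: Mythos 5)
Your argument is correct, and it is genuinely more self-contained than what the paper offers: the paper gives no proof of Proposition~\ref{chareuc} at all, merely asserting that it ``follows from'' \cite[Theorem 6.4]{Dough3}. What you have done is, in effect, unwind that citation. The crux is your Step~1, the observation that $\Psi_k$ is not just a bijection but a ring isomorphism $B_k\cong\mathbb{F}_{p^r}^{2^k}$ --- the Chinese Remainder decomposition attached to the $2^k$ maximal ideals of Proposition~\ref{formmaxideal}, each coordinate being the evaluation $v_j\mapsto 1$ for $j\in H_i$ and $v_j\mapsto 0$ otherwise. The paper never states this multiplicativity explicitly (it only records that $\Psi_k$ is a bijection and a permutation of the Gray map $\Phi_k$), yet it is exactly the fact needed to make the citation legitimate, so supplying it is a real contribution rather than a redundancy. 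Your computation of $\Psi_k(ab)_i$ via $v_Hv_{H'}=v_{H\cup H'}$ is right, the resulting componentwise identity for the Euclidean product is right, and both directions of the equivalence then go through as you describe; the only hypothesis you quietly use is that the $C_j$ are linear (so that $0\in C_j$ and the ``place $d$ in the $j$-th slot, zeros elsewhere'' lift lands in $C$), which is guaranteed by Lemma~\ref{linear} since self-dual codes are linear. In short: same underlying mechanism as the cited result (componentwise self-duality over a product ring), but written out from first principles where the paper delegates to a reference.
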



As an immediate consequence, by applying Theorem \ref{charthetacyclic} and Proposition \ref{chareuc},
 we have the following result.

\begin{theorem}
A linear code $C$ over $B_k$ is Euclidean self-dual $\theta$-cyclic of length $n$
if and only if there exist self-dual quasi-$\tilde{\theta}$-cyclic
codes $C_1,C_2,\dots,C_{2^k}$ of length $n$ such that
\[
C=\overline{\Psi}_k^{-1}(C_1,C_2,\dots, C_{2^k})
\]
where $\tilde{\theta}=\phi^{t},$ for some $t$ as in Lemma \ref{autodecomp}, with $\phi$ is the Frobenius automorphism in
$\mathbb{F}_{p^r},$ and
$T_{\tilde{\theta}}^{t_1}(C_i)\subseteq C_{\tilam^{t_1}(i)}$ for all
$i=1,2,\dots,2^k,$ where $1\leq t_1\leq 2^k.$
\end{theorem}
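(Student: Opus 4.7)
The plan is to obtain the result as a direct synthesis of the two preceding characterizations, with no new machinery required. The key observation is that the bijection $\overline{\Psi}_k$ decouples the two structural properties at stake: $\theta$-cyclicity over $B_k$ corresponds component-wise to quasi-$\tilde{\theta}$-cyclicity of the images in $\mathbb{F}_{p^r}^n$ together with the inter-component shift condition (this is Theorem~\ref{charthetacyclic}), while Euclidean self-duality over $B_k$ corresponds component-wise to Euclidean self-duality of the images (this is Proposition~\ref{chareuc}). Since these two conditions are read off of the same tuple $(C_1,\ldots,C_{2^k})$, they can be imposed simultaneously.

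For the forward direction, I would assume $C$ is an Euclidean self-dual $\theta$-cyclic linear code over $B_k$ of length $n$. Applying Theorem~\ref{charthetacyclic} — which uses only the $\theta$-cyclicity of $C$ — produces codes $C_1,\ldots,C_{2^k}$ over $\mathbb{F}_{p^r}$ that are quasi-$\tilde{\theta}$-cyclic of index $\ord(\tilam)$ and satisfy $T_{\tilde{\theta}}^{t_1}(C_i)\subseteq C_{\tilam^{t_1}(i)}$, with $C=\overline{\Psi}_k^{-1}(C_1,\ldots,C_{2^k})$. Then invoking Proposition~\ref{chareuc} — which uses the Euclidean self-duality of $C$ — forces every $C_i$ to be Euclidean self-dual over $\mathbb{F}_{p^r}$, completing the decomposition.

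For the backward direction, given self-dual quasi-$\tilde{\theta}$-cyclic codes $C_1,\ldots,C_{2^k}$ of length $n$ over $\mathbb{F}_{p^r}$ that satisfy the shift compatibility with respect to $\tilam$, the same two results run in reverse: Theorem~\ref{charthetacyclic} certifies that $C=\overline{\Psi}_k^{-1}(C_1,\ldots,C_{2^k})$ is a $\theta$-cyclic linear code over $B_k$, and Proposition~\ref{chareuc} certifies that $C$ is Euclidean self-dual.

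The only conceptual hurdle is the compatibility of $\overline{\Psi}_k$ with the Euclidean inner product on $B_k^n$, which is exactly what Proposition~\ref{chareuc} encapsulates (via the cited result of Dougherty); granted that, the argument is pure bookkeeping, with no separate verification of self-orthogonality or dimension counting needed. I would therefore present the proof as a two-line deduction, with a brief sentence noting that the shift condition and the self-duality condition are independent constraints on the tuple $(C_1,\ldots,C_{2^k})$ and hence can be combined without further compatibility checks.
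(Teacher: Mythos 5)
Your proposal is correct and follows exactly the paper's own route: the paper states this theorem as ``an immediate consequence'' of Theorem~\ref{charthetacyclic} and Proposition~\ref{chareuc}, which is precisely the two-step synthesis you describe. Nothing further is needed.
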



The following lemma gives the minimum distance for codes over $B_k.$
\begin{lemma}
Let $C$ be a linear code over $B_k.$ If $C=\overline{\Psi}_k^{-1}(C_1,\dots,C_{2^k}),$
for some codes $C_1,\dots,C_{2^k}$  over $\mathbb{F}_{p^r},$ then
$d_H(C)=\min_{1\leq i\leq 2^k} d_H(C_i).$
\label{jarak}
\end{lemma}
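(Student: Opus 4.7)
The plan is to prove $d_H(C)=\min_i d_H(C_i)$ by establishing the two inequalities separately, exploiting the fact that $\overline{\Psi}_k$ is a coordinate-wise $\mathbb{F}_{p^r}$-linear bijection, together with the structural description of $C$ provided by Lemma~\ref{linear}. The central observation to record first is that, since $\Psi_k$ is a bijection with $\Psi_k(0)=0$, an element $a\in B_k$ vanishes precisely when all $2^k$ entries of $\Psi_k(a)$ vanish. Applied coordinate by coordinate to $c\in B_k^n$ with $\overline{\Psi}_k(c)=(c^{(1)},\dots,c^{(2^k)})$, where $c^{(i)}\in\mathbb{F}_{p^r}^n$, this yields the support identity
\[
\mathrm{supp}(c)=\bigcup_{i=1}^{2^k}\mathrm{supp}(c^{(i)}),
\]
so in particular $\mathrm{wt}(c)\geq \mathrm{wt}(c^{(i)})$ for every $i$.

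For the inequality $d_H(C)\geq \min_i d_H(C_i)$, I would pick a nonzero codeword $c\in C$ realising $d_H(C)$. By the support identity at least one component $c^{(i_0)}\in C_{i_0}$ is nonzero and satisfies $\mathrm{supp}(c^{(i_0)})\subseteq \mathrm{supp}(c)$; hence
\[
d_H(C)=\mathrm{wt}(c)\geq \mathrm{wt}(c^{(i_0)})\geq d_H(C_{i_0})\geq \min_{1\leq i\leq 2^k} d_H(C_i).
\]

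For the reverse inequality, let $i_0$ achieve $\min_i d_H(C_i)$ and pick $c^{(i_0)}\in C_{i_0}$ of weight $d_H(C_{i_0})$. Lemma~\ref{linear} guarantees that the preimage $c=\overline{\Psi}_k^{-1}(0,\dots,0,c^{(i_0)},0,\dots,0)$, with $c^{(i_0)}$ placed in the $i_0$-th slot and all other blocks zero, actually lies in $C$. The support identity then collapses to $\mathrm{supp}(c)=\mathrm{supp}(c^{(i_0)})$, forcing $\mathrm{wt}(c)=\mathrm{wt}(c^{(i_0)})=\min_i d_H(C_i)$ and giving $d_H(C)\leq \min_i d_H(C_i)$. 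There is no substantial obstacle in this argument; the only step requiring care is the invocation of Lemma~\ref{linear} to ensure that the single-block preimage used in the upper bound genuinely belongs to $C$, which is precisely where the hypothesis that each $C_i$ is a linear code over $\mathbb{F}_{p^r}$ enters.
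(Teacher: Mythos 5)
Your proof is correct, and for the upper bound $d_H(C)\leq\min_i d_H(C_i)$ it follows exactly the paper's route: take a minimum-weight word of the minimizing component code, place it in a single slot, and observe that the $\overline{\Psi}_k$-preimage has the same weight because $\Psi_k$ is a bijection fixing $0$. Where you go beyond the paper is the reverse inequality. The paper's proof stops after exhibiting a codeword of $C$ of weight $\min_i d_H(C_i)$ and simply asserts equality, leaving the bound $d_H(C)\geq\min_i d_H(C_i)$ implicit; your support identity $\mathrm{supp}(c)=\bigcup_i\mathrm{supp}(c^{(i)})$, derived from $a=0\iff\Psi_k(a)=0$, is precisely the missing ingredient, since it shows every nonzero $c\in C$ dominates the weight of some nonzero $c^{(i_0)}\in C_{i_0}$. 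So your argument is not a different method but a completed version of the paper's sketch, and the completion is genuinely needed for the lemma as stated.
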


\begin{proof}
Suppose $\min_{1\leq i\leq 2^k} d_H(C_i)=d_H(C_j)$ for some $j,$ and let $c\in C_j$
such that $d_H(c)=d_H(C_j),$ then we have
\[
d_H(\overline{\Psi}_k^{-1}(0,\dots,0,c,0,\dots,0))=d_H(C_j).
\]
 So, $d_H(C)=\min_{1\leq i\leq 2^k} d_H(C_i).$
\end{proof}

As an immediate consequence of Lemma \ref{jarak}, we have the following result.

\begin{corollary}
 If $C=\overline{\Psi}_k^{-1}(C_1,\dots,C_{2^k})$ be a $\theta$-cyclic code over $B_k,$ where $C_i$ are
quasi-$\tilde{\theta}$-cyclic code of index $\ord(\tilam)$ over $\mathbb{F}_{p^r}$ as in
Theorem \ref{charthetacyclic}, for all $i,$ then
\[
d(C)=\min\{d(C_1),\dots,d(C_{2^k})\}.
\]
\end{corollary}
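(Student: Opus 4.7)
The plan is to observe that this corollary is an essentially immediate specialization of Lemma~\ref{jarak}, once the hypothesis supplied by Theorem~\ref{charthetacyclic} is unpacked. By definition, a $\theta$-cyclic code $C$ over $B_k$ is a $B_k$-submodule of $B_k^n$, hence in particular a linear code over $B_k$. Theorem~\ref{charthetacyclic} supplies the decomposition $C=\overline{\Psi}_k^{-1}(C_1,\dots,C_{2^k})$ in which each $C_i$ happens to carry the extra structure of being a quasi-$\tilde{\theta}$-cyclic code of index $\ord(\tilam)$ over $\mathbb{F}_{p^r}$; but for the statement about the minimum Hamming distance, this cyclic structure is irrelevant.

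Concretely, I would just invoke Lemma~\ref{jarak} verbatim. The lemma asserts that for \emph{any} linear code $C$ over $B_k$ expressible as $\overline{\Psi}_k^{-1}(C_1,\dots,C_{2^k})$ with $C_i$ linear over $\mathbb{F}_{p^r}$, one has $d_H(C)=\min_{1\le i\le 2^k} d_H(C_i)$. Since quasi-$\tilde{\theta}$-cyclic codes are in particular $\mathbb{F}_{p^r}$-linear, the hypothesis of Lemma~\ref{jarak} is satisfied, and the conclusion
\[
d(C)=\min\{d(C_1),\dots,d(C_{2^k})\}
\]
follows at once.

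There is no real obstacle here: the only conceptual point is that Theorem~\ref{charthetacyclic} produces the required decomposition of $C$ as an image under $\overline{\Psi}_k^{-1}$, and then Lemma~\ref{jarak} is applied as a black box. One could, for completeness, recall the two directions of the argument given in the lemma's proof—namely, that a minimum-weight codeword in any single $C_j$ lifts via $\overline{\Psi}_k^{-1}(0,\dots,0,c,0,\dots,0)$ to a codeword of $C$ of the same weight (giving $\le$), while any nonzero codeword of $C$ has image under $\overline{\Psi}_k$ with at least one nonzero $\mathbb{F}_{p^r}$-component, whose Hamming weight is bounded below by the corresponding $d(C_i)$ (giving $\ge$). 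But these need not be rewritten, so the corollary's proof should be a single sentence citing Lemma~\ref{jarak} together with Theorem~\ref{charthetacyclic}.
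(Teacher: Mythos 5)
Your proposal is correct and matches the paper exactly: the paper presents this corollary as an immediate consequence of Lemma~\ref{jarak}, with no further argument. Your additional remark sketching both inequalities is a bonus (the lemma's own proof in the paper only spells out one direction), but the citation of Lemma~\ref{jarak} is all that is needed.
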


This means, we can use the optimal $\theta$-cyclic codes or
quasi-$\theta$-cyclic codes over $\mathbb{F}_{p^r}$ to construct optimal $\theta$-cyclic
codes over $B_k$ for all $k$ with respect to Hamming weight.
In \cite{boucher}, Boucher and Ulmer find some optimal
$\theta$-cyclic codes over finite fields, including Euclidean self-dual code $[36,18,11]$
which improves previous known bound for self-dual codes of
length $36.$ We use this optimal
codes to construct optimal $\theta$-cyclic codes over $B_k.$
Some example of optimal Euclidean self-dual codes are shown in Table 1.
Here we only use $p=2,$ $r=2,$
$\theta|_{\mathbb{F}_{2^2}}$ is a Frobenius automorphism, and $\alpha$
is a generator of $\mathbb{F}_{2^2}^\times.$
Also, the given generators generate $\theta$-cyclic codes in the way as in the
proof of Proposition \ref{gen1}.

\begin{center}
\begin{table}[b!]
\caption{Table of examples of optimal $\theta$-cyclic codes}
\begin{tabular}{|c|c|c|c|c|}\hline\hline
$n$ & $d$ & Generator polynomial & $B_k$ & $\theta$\\[2mm]\hline\hline
 4 & 3 & $x^2+\alpha^2 x+\alpha$ & $B_2$ & \begin{tabular}{l}
            $v_2\mapsto v_1$\\
	    $v_1\mapsto v_2$\\
           \end{tabular}\\ \hline
12 & 6 & $x^{6}+x^{5}+\alpha^2x^4+x^3+\alpha x^2+x+1$ & $B_3$ & \begin{tabular}{l}
            $v_1\mapsto v_2$\\
	    $v_2\mapsto v_3$\\
	    $v_3\mapsto v_1$\\
\end{tabular} \\\hline
 20 & 8 &\begin{tabular}{l}
          $x^{10}+\alpha^2x^{9}+\alpha x^8+x^7+x^6$\\
	  $ +x^4+x^3+\alpha x^2+\alpha x+1$
         \end{tabular}
 & $B_4$ & \begin{tabular}{l}
            $v_1\mapsto v_2$\\
	    $v_2\mapsto v_1$\\
	    $v_3\mapsto 1-v_4$\\
	    $1-v_4\mapsto v_3$\\
          \end{tabular} \\ \hline
36 & 11 & \begin{tabular}{l}
          $x^{18}+x^{16}+\alpha^2 x^{15}+\alpha x^{14}+\alpha^2 x^{13}+x^{12}$\\
	  $ +\alpha x^{10}+\alpha x^9+\alpha x^8+\alpha^2 x^6+x^5+\alpha x^4$\\
	  $ +x^3+\alpha^2 x^2+\alpha^2$
         \end{tabular} & $B_6$ & \begin{tabular}{l}
            $v_1\mapsto 1-v_2$\\
	    $1-v_2\mapsto v_3$\\
	    $v_3\mapsto v_1$\\
	    $v_4\mapsto v_5$\\
	    $v_5\mapsto v_6$\\
	    $v_6\mapsto v_5$\\
          \end{tabular} \\ \hline
40 & 12 & \begin{tabular}{l}
          $x^{20}+x^{17}+\alpha^2 x^{15}+\alpha x^{14}+\alpha^2 x^{13}+\alpha^2 x^{12}$\\
	  $ +x^{11}+ x^9+\alpha x^8+\alpha x^7+\alpha^2 x^6+\alpha x^5+x^3+1$
         \end{tabular} & $B_7$ & \begin{tabular}{l}
            $v_1\mapsto 1-v_2$\\
	    $1-v_2\mapsto v_3$\\
	    $v_3\mapsto v_1$\\
	    $v_4\mapsto v_5$\\
	    $v_5\mapsto v_6$\\
	    $v_6\mapsto v_5$\\
	    $v_7\mapsto v_7$\\
          \end{tabular} \\

\hline
\end{tabular}\\
\end{table}
\end{center}

\section*{Acknowledgement}
D.S. and I are supported by \emph{Riset ITB 2016} and \emph{Penelitian Unggulan Perguruan Tinggi (ITB-Dikti) 2017.}

\end{document}